\newtheorem{theorem}{Theorem}[section]
\newtheorem{corollary}[theorem]{Corollary}
\newtheorem{lemma}[theorem]{Lemma}
\newtheorem{proposition}[theorem]{Proposition}
\theoremstyle{remark}
\theoremstyle{definition}
\def\RR{\mathbb{R}}
\def\R{\mathbb{R}}
\def\ZZ{\mathbb{Z}}
\def\NN{\mathbb{N}}
\def\QQ{\mathbb{Q}}
\newcommand{\PP}{\mathbb{P}}
\renewcommand{\Pr}{\mathbb{P}}
\def\XX{\mathbb{X}}
\def\EE{\mathbb{E}}
\newcommand{\E}{\mathbb{E}}
\newcommand{\xxi}{\zeta}
\newcommand{\II}{{S}}
\def\1{{\bf 1}}
\def\al{\alpha}
\def\ep{\varepsilon}
\def\la{\lambda}
\def\bx{\mathbf{x}}
\def\by{\mathbf{y}}
\def\bz{\mathbf{z}}
\def\b0{\mathbf{0}}
\newcommand{\bp}{p}
\newcommand{\bq}{q}
\newcommand{\bfe}{{e}}
\def\d2{d_2}
\def\scr{\mathscr}
\newcommand{\tg}{\tilde{g}}
\newcommand{\bea}{\begin{eqnarray}}
\newcommand{\eea}{\end{eqnarray}}
\newcommand{\bean}{\begin{eqnarray*}}
\newcommand{\eean}{\end{eqnarray*}}
\newcommand{\eps}{\varepsilon}
\newcommand{\cI}{{\cal I}}
\renewcommand{\emptyset}{\varnothing}
\numberwithin{equation}{section}
\newcommand{\blue}[1]{{\color{black}{#1}}}
\begin{document}
\title{\bf On the
rate of convergence in 
the Hall-Janson coverage theorem 
}
\author{ Mathew D. Penrose$^{1,2}$ and Xiaochuan Yang$^{1,3}$  \\
{\normalsize{\em University of Bath and Brunel University}} }

 \footnotetext[1]{ Supported by EPSRC grant EP/T028653/1 }

\date{\today}

\maketitle

\begin{abstract}   
	Consider a spherical Poisson Boolean model $Z$ in Euclidean $d$-space with $d \geq 2$, with Poisson intensity $t$ and radii distributed like $rY$ with $r \geq 0$ a scaling parameter and $Y$ a fixed nonnegative random variable with
	finite $(2d-2)$-nd moment (or if $d=2$, a finite
	$(2+\eps)$-moment for some $\eps >0$).
	Let $A \subset \R^d$ be compact with a nice boundary.  Let $\alpha $ be the expected volume of a ball of radius $Y$, and suppose $r=r(t)$ is chosen so that $\alpha t r^d - \log t - (d-1) \log \log t$ is a constant independent of $t$.  A classical result of Hall and of  Janson determines the (non-trivial) large-$t$ limit of the  probability that $A$ is fully covered by $Z$. In this paper we provide an $O((\log \log t)/\log t)$ bound on the rate of convergence in that result. With a slight adjustment to $r(t)$, this can be improved to $O(1/\log t)$.

	\footnotetext[2]{Corresponding author: Department of Mathematical Sciences, University of Bath,
 Bath BA2 7AY, UK: \texttt{m.d.penrose@bath.ac.uk} }
	\footnotetext[3]{\texttt{xiaochuan.j.yang@gmail.com} }

\end{abstract}

\section{Introduction and statement of result}
\label{secintro}

A classical problem in stochastic geometry 
\cite{Aldous,HallBk} 
is to determine
the probability of fully covering a region of Euclidean
$d$-space 
with a union of deterministic or random shapes centred on random points.
There are applications in topological data analysis, wireless communications,
and also in biology and chemistry, as discussed in e.g. \cite{HallZW,HallBk,Janson,P23,PH24}.
An asymptotic
answer to this question was provided in the 1980s by 
Hall \cite{HallZW} and Janson \cite{Janson} (Theorem \ref{t:H-J} below).
In this paper we give a more quantitative version of that result.

Let $d \in \NN$ and
let $\QQ$ be a probability measure on $\R_+ := [0,\infty)$.
Given $t >0$, let $\xi_t$ denote a Poisson process in $\R^d \times \R_+$
with intensity measure $t \lambda_d \otimes \QQ$, where
 $\la_d$ denotes the Lebesgue measure on $\RR^d$.
Given also $r >0$, and $k \in \NN$,
define the {\em $k$-occupied set} $Z_k(\xi_t,r)$ and
{\em $k$-vacant set} $V_k(\xi_t,r)$ where for
 any simple counting measure $\mu$ on $\RR^d\times \R_+$ and $r\geq 0$,
 we define  
$$
Z_k(\mu,r):= \{y \in \R^d: \# \{(x,a) \in \mu: y \in B(x,ra)\} \geq k\};
~~~~~~ V_k(\mu,r)= \R^d \setminus Z_k(\mu,r), 
$$
where for $x \in \R^d$ and $s >0$,
we set $B(x,s):= \{y \in \R^d: \|y-x\|\leq s\}$, and
$\|\cdot\|$ is the Euclidean norm.
 Then $Z_k(\xi_t,r)$ is the set of locations that are covered
 at least $k$ times by
a {\em spherical Poisson Boolean model}
(SPBM)
consisting of balls (known as {\em grains})
with independent $\QQ_r$-distributed random radii centred
on the points of a homogeneous Poisson process of intensity $t$ in $\R^d$.
Here $\QQ_r$ denotes the distribution of $rY$, and
$Y$ denotes a random variable with distribution $\QQ$.

%
%

Let $A$ denote a compact subset of $\RR^d$, with interior denoted
$A^o$. Assume throughout 
	that $ \lambda_d(A)= \lambda_d(A^o) >0$.
This paper is concerned with the asymptotic as $t\to\infty$ of 
the {\em coverage threshold} $R_{t,k}$ defined by 
\begin{align*}
	R_{t,k} := \inf\{r>0: A \subset Z_k(\xi_t,r)\}.
\end{align*}
%
Let
   $\theta_d :=\la_d(B_1(o)) = \pi^{d/2}/\Gamma (1+d/2)$, the
   volume of the $d$-dimensional unit ball.
   Here $o$ denotes the origin in $\R^d$.
 The following result identifies the weak limit of $R_{t,k}$.
 
\begin{theorem}[Hall, Janson]
	\label{t:H-J}
	Suppose $0 < \EE[Y^{d+\eps}] < \infty$ for some $\eps >0$.
	Set $\al =  \theta_d \EE[Y^d]$.
	Let $k\in\NN$,  
	$\beta \in\RR$ and define $(r_t)_{t >1}$ taking values
	in $\R_+$ by the relation  
\begin{align}\label{e:r_t0}
	\al t r_t^d = \left( \log t + ( d+k-2 )
	\log\log t + \beta \right) \vee 0,
\end{align}
	where we use $u \vee v$ for the maximum of two real numbers
	$u,v$.
	Then  
\begin{align*}
	\lim_{t\to\infty} \PP[R_{t,k}\le r_t] = \exp( - c_{d,k,Y}
	\la_d(A) e^{-\beta} )
\end{align*}
	where the constant $c_{d,k,Y}$ is given by 
	\begin{align}
		c_{d,k,Y} := \frac{1}{d!(k-1)!}
		\Big(\frac{\sqrt{\pi} \Gamma(1 + d/2)}{\Gamma((d+1)/2)}\Big)^{d-1} \frac{(\EE[Y^{d-1}])^d}{(\EE[Y^{d}])^{d-1}}.
		\label{e:cdkY}
	\end{align}
\end{theorem}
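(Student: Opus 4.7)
The plan is to recast the coverage event $\{R_t \le r_t\} = \{V(\xi_t,r_t)\cap A = \emptyset\}$ as the event that an auxiliary point process $\Xi_t$ of ``extremal vacant vertices'' is empty, and to show $\Xi_t$ is asymptotically a Poisson point process on $A$ with intensity $c_{d,Y} e^{-\beta}$ times Lebesgue measure; the stated coverage probability then emerges as $\lim_{t\to\infty} \PP[\Xi_t(A)=0] = \exp(-c_{d,Y}\lambda_d(A)e^{-\beta})$. Since $\alpha t r_t^d \sim \log t$, the probability that a fixed point is vacant is of order $t^{-1}(\log t)^{-(d-1)}$, so $V(\xi_t,r_t)\cap A$, if nonempty, consists of isolated pockets of diameter $O(r_t)$. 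Each pocket generically contains a unique ``$d$-grain vertex'' $v$: a point lying on the boundary of exactly $d$ grains of $\xi_t$ and outside all others. I take $\Xi_t$ to be the point process of such vertices lying in $A$.

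For the mean, apply the $d$-fold Slivnyak--Mecke formula. Writing $v=v(x_1,\dots,x_d;y_1,\dots,y_d)$ for the common point of $\partial B(x_i,r_t y_i)$, $i=1,\dots,d$, and $G\subset \R^d$ for the union of those $d$ grains,
\begin{align*}
	\EE[\Xi_t(A)] = \frac{t^d}{d!}\int \1\{v\in A\}\, e^{-t\,\lambda_d(G)}\, \prod_{i=1}^d \QQ(dy_i)\, dx_i.
\end{align*}
The change of variables $(x_1,\dots,x_d)\mapsto (v,u_1,\dots,u_d)$ with $u_i$ the unit vector from $v$ to $x_i$ has Jacobian $\prod_i (r_t y_i)^{d-1}$ times a Gram-type determinant in the $u_i$; integrating the latter over $(S^{d-1})^d$ yields the geometric factor $(\sqrt{\pi}\,\Gamma(1+d/2)/\Gamma((d+1)/2))^{d-1}$, while $t\lambda_d(G) = \alpha t r_t^d (1+o(1))$ contributes the void factor $t^{-1}(\log t)^{-(d-1)}e^{-\beta}(1+o(1))$. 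Combining with the $t^d$ prefactor from Mecke and integrating $y_i$ against $\QQ$ produces exactly the constant $c_{d,Y}$ of \eqref{e:cdkY}, giving $\EE[\Xi_t(A)]\to c_{d,Y}\lambda_d(A)e^{-\beta}$. Convergence of all higher factorial moments to $(c_{d,Y}\lambda_d(A)e^{-\beta})^k$ follows from the $dk$-fold Mecke formula by splitting off diagonal configurations (two vertices sharing a grain, or lying at distance $\le C r_t \log\log t$), which are negligible using the assumed finiteness of $\EE[Y^{2d}]$; the off-diagonal contribution factorises in the limit.

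To close the argument one must show that $\{\Xi_t(A)=0\}$ and $\{A\subset Z(\xi_t,r_t)\}$ differ with vanishing probability. One direction is clear. For the converse, if $x\in A$ is uncovered, one shrinks all radii $r_t y_i$ of neighbouring grains uniformly until first contact, producing a $d$-grain vertex in a compact neighbourhood of $x$; with high probability this vertex lies in $A$, as the hypothesis $\lambda_d(A^o)=\lambda_d(A)$ and the scale $r_t\to 0$ bound the contribution from a shrinking tubular neighbourhood of $\partial A$. The main obstacle in my view is the mean computation: extracting the explicit constant $c_{d,Y}$ requires a careful polar decomposition and evaluation of the Vandermonde-type Jacobian, and keeping the error terms uniform as $v$ varies over $A$ and the $y_i$ are integrated over $\QQ$; here the tail of $\QQ$ (hence the $(2d)$-th moment hypothesis) is what makes the estimates uniform. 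Promoting the $o(1)$'s to the quantitative $O((\log\log t)/\log t)$ rate announced in the abstract would then proceed by a Chen--Stein/Barbour--Brown type bound applied to the already-computed first and second factorial moments.
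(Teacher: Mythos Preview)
Your overall architecture---count vacant ``$d$-grain vertices'' via the multivariate Mecke formula, show the count is asymptotically Poisson, and sandwich the coverage event---is essentially the route taken both in Janson's original proof and in the present paper's proof of the quantitative Theorem~\ref{t:main} (the paper itself does not re-prove Theorem~\ref{t:H-J} but cites Hall and Janson).  Two differences of method are worth noting: you propose factorial-moment convergence for the Poisson limit, whereas the paper uses a Chen--Stein coupling bound (Lemma~\ref{t:pen18}); and the paper does not count \emph{all} vacant $d$-grain vertices but only those satisfying a local-minimum condition encoded in the indicator $h^{(r)}$ of \eqref{e:defh}.  Both Poisson-approximation routes are viable here, but the second point is where your sketch has a genuine gap.

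Three concrete problems.  First, your void factor is miswritten: the probability that the vertex $v$ is uncovered by the remaining grains is $e^{-\alpha t r_t^d}$, not $e^{-t\lambda_d(G)}$ with $G=\cup_i B(x_i,r_ty_i)$; these differ in the exponent by roughly a factor of $d$, so the asserted relation $t\lambda_d(G)=\alpha t r_t^d(1+o(1))$ is false.  Second, and more structurally, for a generic $d$-tuple the sphere intersection $\cap_i\partial B(x_i,r_ty_i)$ consists of \emph{two} points, and a vacant component can carry several such vertices, so ``each pocket generically contains a unique $d$-grain vertex'' is not true and your change of variables $(x_1,\dots,x_d)\mapsto(v,u_1,\dots,u_d)$ is two-to-one without a selection rule.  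The paper fixes this by retaining only the higher intersection point $q$ and further requiring $e_d\in\mathrm{Cone}(q-x_1,\dots,q-x_d)^o$ (Lemma~\ref{l:hgg}); it is exactly this cone condition that produces the factor $2^{-d}$ in Lemma~\ref{l:compc0} and hence the correct constant $c_{d,Y}$.  Third, your equivalence step fails as written: ``shrinking all radii until first contact'' produces a vertex for a \emph{modified} configuration with smaller grains, not for $\xi_t$ itself.  The paper's replacement is to show (Lemmas~\ref{l:perc} and~\ref{l:sc_cov}) that with high probability every vacant component meeting $A$ is contained in a slight enlargement $A'$, and that the \emph{lowest} point of such a component is necessarily a vacant $d$-grain local minimum; this gives the needed inequality $\PP[\{R_t>r_t\}\cap\mathscr V_t]\le\PP[F_{A',t}>0]$ without any uniqueness claim.
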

The case $k=1$ of
Theorem  \ref{t:H-J} is immediate from \cite[Theorem 2]{HallZW}. The
general case
can be derived from
the result in \cite{Janson}; see
\cite[Lemma 7.2]{P23} and \cite[Lemma 3.2]{PH24}.


Another way to formulate this theorem is the convergence in distribution
$$
\al t R_{t,k}^d - \log t -  (d+k-2)\log\log t \to G+
\log (c_{d,k,Y}\lambda_d(A)),
$$
where $G$ is a standard Gumbel random variable.

Previous proofs of Theorem \ref{t:H-J} do not
provide any quantitative error bound for finite $t$, or any rate of convergence.
Our aim in this paper is to provide a new proof
(or at least, one with some new ingredients)
that gives a rate of convergence in the Hall-Janson
theorem.  We do so under the extra assumption that $d \geq 2$;
when $d \geq 3$ we also require
a stronger moment condition on $Y$ than in Theorem \ref{t:H-J}.


If $f(t) \in \R$ and $g(t) \in (0,\infty)$ are defined 
for all large enough $t \in \R$, we say $f(t) = O(g(t))$ as
$t \to \infty $ if
$\limsup_{t \to \infty} (|f(t)|/g(t)) < \infty$.

\begin{theorem}  
	\label{t:main}
	Suppose $0 < \E[Y^{2d-2}] < \infty$ and
	 $0 < \E[Y^{d+\eps}] < \infty$ for some $\eps >0$.
	  Assume that $d \geq 2$,
	let $k \in \NN$,
	and let $(r_t)_{t>1}$ taking values in $\R_+$ be given
	by the relation
\begin{align}\label{e:r_t}
	\al t r_t^d = \left( \log t + ( d+k-2 ) \log\log t + \beta 
	+ \frac{(d+k-2)^2 \log \log t}{\log t}
	\right) \vee 0.
\end{align}
	As $t\to\infty$, it holds that
	\begin{align}
		\PP[R_{t,k}\le r_t] - \exp( - c_{d,k,Y} \la_d(A) e^{-\beta})
		= 
		O \Big(\la_d(A' \setminus A'') + \frac{1}{\log t}\Big),
		\label{e:JQ}
	\end{align}
		where $A' := A'(t) := A\oplus B(o,\sqrt{r_t})$ and
		$\oplus $ denotes the Minkowski sum,
		while $A'' := A''(t):= \{x \in A: B(x,\sqrt{dr_t}) \subset A\}$.
\end{theorem}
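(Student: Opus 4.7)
The plan is to encode the coverage event via the vanishing of a count of vacant ``vertices'' of the Boolean model and then apply quantitative Poisson approximation. In the SPBM the boundary of $V(\xi_t,r_t)$ generically decomposes into spherical caps meeting in $(d-2)$-dimensional arcs and on down to $0$-dimensional \emph{vertices} where exactly $d$ ball boundaries cross. Call $z\in\R^d$ a \emph{vacant $d$-crossing} if $z\in\partial B(x_i,r_t a_i)$ for exactly $d$ distinct marked points $(x_i,a_i)\in\xi_t$ and $z\in V(\xi_t,r_t)$, and let $W_t$ be the number of such crossings lying in $A'=A\oplus B(o,\sqrt{r_t})$. The theorem will follow from three ingredients: a topological reduction $\PP[A\subset Z(\xi_t,r_t)]=\PP[W_t=0]+O(\la_d(A'\setminus A)+1/\log t)$; a quantitative Poisson approximation $\dtv(W_t,\PRV(\mu_t))=O(1/\log t)$ with $\mu_t:=\E W_t$; and a sharp mean asymptotic $\mu_t=c_{d,Y}\la_d(A)e^{-\beta}+O(1/\log t)$.

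For the topological reduction, any connected component of $V(\xi_t,r_t)$ meeting $A$ either contains a vertex of the vacant set inside $A'$ (so $W_t\ge 1$) or escapes through $\partial A$ without one; the latter forces either a ball of atypically large radius or a wide empty tube along $\partial A$, both controllable by $O(\la_d(A'\setminus A)+1/\log t)$ under $\E[Y^{2d}]<\infty$. The enlargement by $\sqrt{r_t}$ is tuned so that balls of radius exceeding $\sqrt{r_t}$ are rare enough not to contribute. For the Poisson approximation I plan to apply a Chen--Stein / Barbour--Brown bound
\[
\dtv(W_t,\PRV(\mu_t)) \le b_1(t)+b_2(t),
\]
where $b_1,b_2$ sum Palm expectations of crossing indicators over ordered pairs of $d$-tuples sharing at least one Poisson point. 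By Slivnyak--Mecke a specific crossing is vacant with probability $\exp(-\alpha tr_t^d)\asymp 1/(t(\log t)^{d-1})$, and the density of candidate crossings is $O(t^dr_t^{d(d-1)})$; summing $b_1+b_2$ across the overlap patterns yields $O(1/\log t)$.

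The sharp mean computation is where the main work lies. By the multivariate Mecke formula together with a Blaschke--Petkantschin-type change of variables parametrising the $d$ centres by the crossing point $z$ and the $d$ unit directions from $z$, one obtains $\mu_t=K_{d,Y}\la_d(A)\cdot t^d r_t^{d(d-1)}\exp(-\alpha tr_t^d)(1+o(1))$, where $K_{d,Y}$ arises from a Vandermonde-type spherical integral together with the moment factor $(\E[Y^{d-1}])^d/\alpha^{d-1}$ and combines with $\alpha^{-(d-1)}$ to give precisely $c_{d,Y}$. Substituting $\alpha tr_t^d=\log t+(d-1)\log\log t+\beta+(d-1)^2(\log\log t)/\log t+O(1/t)$ from~\eqref{e:r_t}, the $(d-1)^2(\log\log t)/\log t$ corrections arising from expanding $(\log t+(d-1)\log\log t+\cdots)^{d-1}$ in the prefactor and from the exponential $\exp(-\alpha tr_t^d)$ cancel, leaving residual $O(1/\log t)$. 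The extra correction term in the definition of $r_t$ is engineered precisely for this cancellation.

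The main obstacle is executing this cancellation with explicit error control: tracking every subleading contribution from the Blaschke--Petkantschin Jacobian, from the moments of $\QQ$, and from the exponential weight requires a careful asymptotic expansion, and verifying termwise that all residuals of order $(\log\log t)/\log t$ do indeed combine to vanish. The hypothesis $\E[Y^{2d}]<\infty$ then enters both to bound escape probabilities in the topological reduction and to keep the Poisson-approximation bounds at the asserted rate.
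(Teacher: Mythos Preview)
Your high-level architecture (topological reduction, mean asymptotic, Poisson approximation) matches the paper's, but your choice of counting variable $W_t$ is a genuine gap that makes both the mean computation and the Poisson approximation fail simultaneously.

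You take $W_t$ to be the number of \emph{all} vacant $d$-crossings in $A'$. Such crossings cluster: each bounded component of $V(\xi_t,r_t)$ is a curved polytope whose boundary carries several $d$-fold intersection vertices (already in $d=2$ a lens-shaped vacant region has two). Hence $W_t$ is compound Poisson, not Poisson, and $\E[W_t]$ overshoots $c_{d,Y}\la_d(A)e^{-\beta}$ by a fixed factor larger than one; a direct computation in $d=2$ gives a factor of $4$. Since $\{W_t=0\}$ really does coincide with coverage up to boundary effects, and the coverage probability tends to $\exp(-c_{d,Y}\la_d(A)e^{-\beta})$, your two claims $\mu_t\to c_{d,Y}\la_d(A)e^{-\beta}$ and $\dtv(W_t,\PRV(\mu_t))=O(1/\log t)$ are mutually inconsistent. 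Concretely, any Chen--Stein $b_2$-type term will pick up pairs of vacant vertices lying on the \emph{same} vacant component, and these contribute at order $\Theta(1)$, not $O(1/\log t)$.

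The paper's key additional idea is to count not all vacant crossings but only those that are \emph{local minima} in the $e_d$-direction of the closed vacant set; equivalently (Lemma~\ref{l:hgg}), those crossings $q$ with $e_d\in\mathrm{Cone}(q-x_1,\ldots,q-x_d)^o$. This selection (i) yields exactly the constant $c_{d,Y}$ in the mean, through the $2^{-d}$ factor in Lemma~\ref{l:compc0}; (ii) makes the reduction work because every bounded vacant component has at least one local minimum; and (iii) is precisely what drives the Poisson approximation, since the hardest term in the coupling bound (Proposition~\ref{p:case2}, the analogue of your $b_2$) is controlled via the hyperplane condition encoded in the local-minimum requirement---see the inequality~\eqref{e:u_i}, which is false for unrestricted crossings. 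Your remarks on the percolation-type confinement to $A'$ and on the cancellation of the $(d-1)^2(\log\log t)/\log t$ term in the mean are otherwise on target.
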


We note that if $A$ has a smooth or polytopal boundary, then the
right hand side of \eqref{e:JQ} is simply $O(1/(\log t))$.

We sketch the main ideas of our proof. Let us refer to the closure of
each connected component of $V_k(\xi_t,r_t)$ as a {\em vacant region}.
A percolation argument shows that with high probability, all vacant regions
intersecting $A$ are confined to the slightly bigger region $A'$.
The lowest point of each vacant region is a
local minimum of the closure of $V_k(\xi_t,r_t)$, and the mean number
of local minima in $A$ or in $A'$
can be computed using the multivariate Mecke formula.
We divide $A$ into moderately small boxes and  
show by a second moment calculation (again using the Mecke formula)
that the probability of there being a box containing
a pair of local minima  
decays like $1/\log t$ (it is this step in the proof that takes the most work).
One can then use independence of
contributions of different boxes, and standard Poisson-binomial
approximation, to get the result. 

For $k=1$ one can \blue{alternatively} use a version
of the Chen-Stein method 
to show that the number of such local minima 
in $A$ is approximately Poisson;
see the earlier version  of this paper \cite{PY24}.
Unlike that method,
the box method here works for general $k$.

If we were to define $(r_t)_{t >1}$ by the simpler relation 
\eqref{e:r_t0} rather than by \eqref{e:r_t}, we would
get \eqref{e:JQ} with
$O((\log \log t)/\log t)$ instead of
$O(1/\log t)$ on the right. This can be done  by following 
the proof  of Theorem \ref{t:main} with a few modifications,
mainly in the 
proof of Proposition \ref{p:meanlim}.

We now identify various possible extensions to our result that
would be of interest.

We restrict attention to grains that are Euclidean balls of possibly 
random radius.  More general distributions for the shapes of the
grains in the Boolean model 
are considered both by Hall  \cite{HallZW} and by Janson \cite{Janson};
for example, a uniformly randomly rotated random polygon.
It would be natural to try to extend our result to these settings too.

Hall \cite{HallZW} shows that for $k=1$, the number 
of vacant regions intersecting $A$ is asymptotically Poisson
with parameter  $c_{d,1,Y} \lambda_d(A) e^{-\beta}$. 
It may be possible  to extend our method
to give a bound on the rate of convergence in total variation distance.
Compared to the argument given here, the main missing step would be
to show that with high probability each vacant region within $A'$
has exactly one
local minimum (for our result here, all we need is the fact that each
vacant region has at least one local minimum).

Our method, in particular, the proof of Proposition \ref{p:case2},
does not appear to work for $d=1$.

In \cite{P23} and \cite{PH24}, analogous results to Theorem \ref{t:H-J}
are presented for the {\em restricted} spherical Poisson Boolean model where
one discards all grains centred outside $A$, and considers 
coverage of $A$ by
the remaining grains.  The proof of these results relies heavily on
Theorem \ref{t:H-J}. Using Theorem \ref{t:main}, it may
be  possible to obtain rates of convergence
in those results too, at least for $d \geq 3$.
 (The coverage process on the boundary of $A$ is
a $(d-1)$-dimensional SPBM, so we may need
a rate of convergence for the unrestricted SPBM in $d=1$
to derive a rate for the restricted SPBM in  $d=2$.)

Let $B \subset \R^d$  be compact with $A\subset B^o$.
Janson \cite{Janson} proves a variant of Theorem \ref{t:H-J} where
the Poisson point process $\xi_t$ is replaced by 
a {\em binomial} point process in $B \otimes \R_+$, given by a collection of 
$n$ random elements of $B \times \R_+$ with probability
distribution $\lambda_d(B)^{-1} \lambda_d|_B \otimes \QQ$, with $n \in \NN$,
as $n \to \infty$ with the scaling parameter $r$ now depending on $n$.
It would be of interest to provide a bound on the
rate of convergence for this result too.

It follows from our result that if $A$ has a nice boundary
then there exist positive finite  constants $c', t_0 $ such that
$|\PP[R_{t,k} \leq r_t] - \exp(-c_{d,k,Y} \lambda_d(A) e^{-\beta})|$
is bounded by $c'/\log t$ for all  $t >t_0$. In principle, one
could obtain numerical values for $c'$ and $t_0$ by going through
our proof. For $d=2$, $Y \equiv 1$,
numerical upper and lower bounds 
for the probability of coverage can be found in \cite{HallBk, Lan}.
Those bounds are not asymptotically  sharp.


\section{Reduction to a counting problem}
We now embark on proving Theorem \ref{t:main}.
We fix $d$, $k$,
$\QQ$, $A$ and $\beta$, and assume from now on that $(r_t)_{t > 1}$
is given by \eqref{e:r_t}.

We can and do assume without loss of generality that $
\QQ(\{0\}) = \PP[Y=0]=0$. Indeed,
the coverage capability of 
$Z_k(\xi_t,r)$ in the case $\PP[Y=0]>0$ is the same
as that of  $Z_k(\eta_t, r$) where $\eta_t$ is a Poisson process
with intensity $\PP[Y>0] t \lambda_d \otimes \QQ[Y\in dy | Y>0]$.

For any $x, x'\in \RR^d$, we say $x$ is {\em lower}
than $x'$ if 
$\langle x, e_d \rangle < \langle x', e_d \rangle$,
where $e_d:= (0,\ldots,0,1)$ is the $d$-th coordinate \blue{unit} vector and
$\langle \cdot, \cdot \rangle$ is the Euclidean inner product.
%
Let $x_1,\ldots,x_d$ be distinct vectors in $\RR^d$,
$r>0$, and $a_1,...,a_d\in \R_+$. 
For $i \in [d]:= \{1,\ldots,d\}$,
set $\bx_i = (x_i, a_i)$. 
If  $\cap_{i\in[d]}\partial B(x_i, r a_i)$
contains exactly two points, write these as  
$p = p^{(r)}(\{\bx_1,\ldots,\bx_d\}),q = q^{(r)} 
(\{\bx_1,\ldots,\bx_d\})$
with $p$ lower than $q$, or  with $p$ preceding $q$ lexicographically
if
$\langle p,e_d \rangle = \langle q, e_d \rangle$.  
(Here $\partial D := D \setminus D^o$ for any compact $D \subset \R^d$.)
	Define 
	the indicator function 
\begin{align}
	h^{(r)}(\{\bx_1,...,\bx_d\})  := 
	{\bf 1}\{
		&
	\#(\cap_{i\in[d]}\partial B(x_i, r a_i)) =2
	\}
	\times {\bf 1} \{q^{(r)}(\{\bx_1,\ldots, \bx_d\})
	\mbox{ is a}
\nonumber	\\
	&
	\mbox{ local minimum of ~}
		\R^d \setminus 
	\cup_{i\in[d]} B(x_i, r a_i)^o
	\},
	\label{e:defh}
\end{align}
where we say a point $x$   is a {\em local minimum} 
of a set $D \subset \R^d$ if $x \in D$, and for some neighbourhood
$U$ of $x$, $x$ is lower than any other point of $D \cap U$.
Also for any $\psi \subset \R^d \times \R_+$ with $\#(\psi) \neq d$,
set $h^{(r)}(\psi) =0$. Three examples are shown in Figure
\ref{f:locmin}.

        \begin{figure}[!h]

\center
\includegraphics[width=10cm]{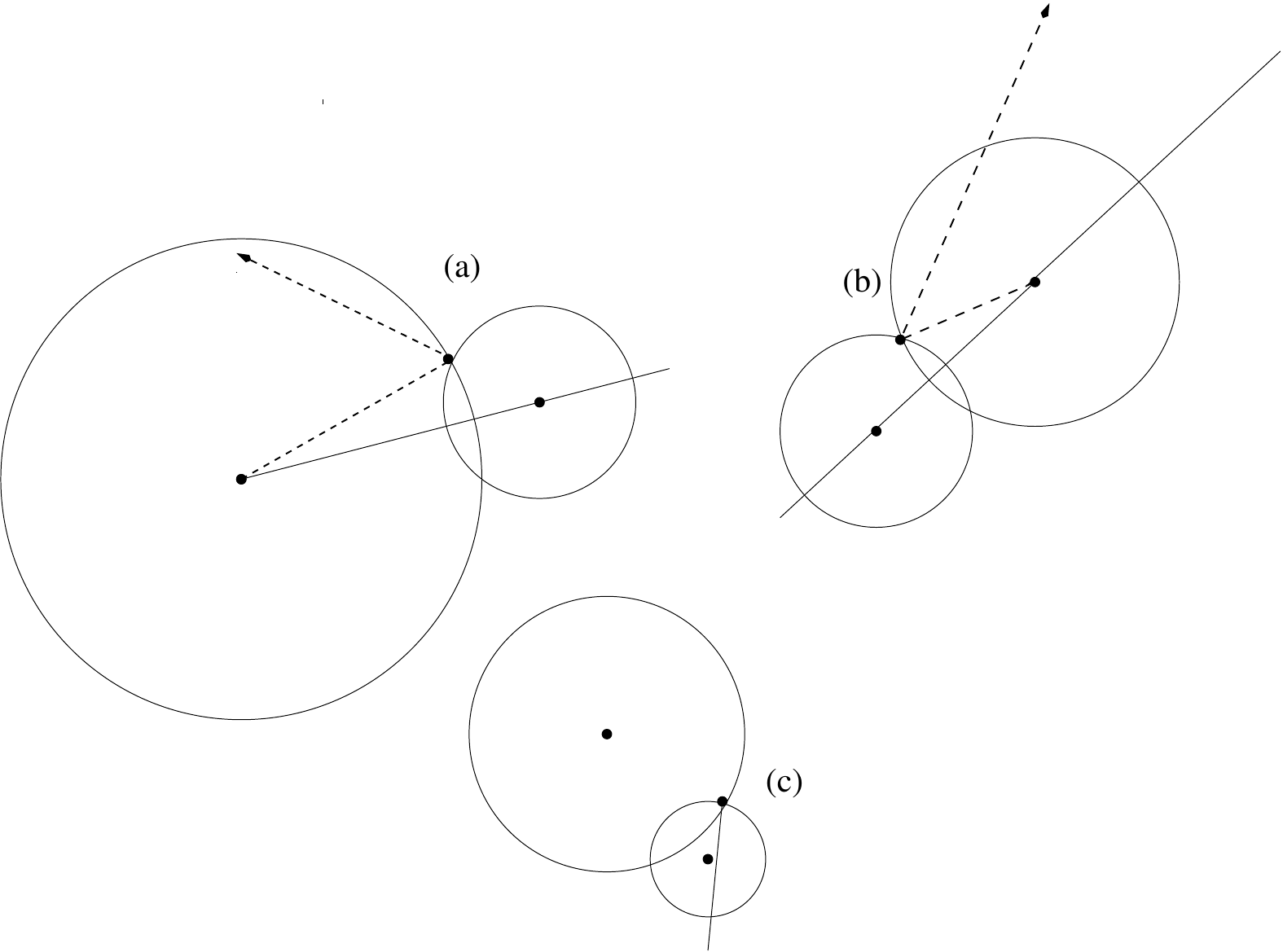}
                \caption{\label{f:locmin}Three examples in $d=2$
		of a pair $\{x_1,x_2\}$ (the centres of two
		overlapping circles) and associated
		$q^{(r)}(\{\bx_1,\bx_2\})$ 
		(the dot shown on the
		 intersection of circles). In (a) and (b)  
		$q^{(r)}(\{\bx_1,\bx_2\})$ is a local minimum
		of the complementary region, but in (c) it is not. 
        }
\end{figure}

For any Borel-measurable set $ D \subset \RR^d$, $r > 0$,
and $\psi \subset \R^d \times \R_+$, define
$$
h^{(r)}_D (\psi) := 
h^{(r)}(\psi) \1 \{q^{(r)}(\psi)\in D\}.
$$
Let $\xxi >0$ be a small fixed constant satisfying
\begin{align}
	0 < \xxi < 
	\frac{\EE[\min(Y,1/2)^d]}{8 d \EE[Y^d]} . 
	\label{e:xxi}
\end{align}
We decompose $\xi_t$ as $\xi'_t \cup \xi'' (t)$, where we
set
$$
\xi'_t : \xi \cap (\R^d \times [0,t^\xxi]); 
~~~~~~~~~~~~~~
\xi''_t : \xi \cap (\R^d \times (t^\xxi,\infty)).
$$

Define the random variable
\begin{align}
	F_{t}(D) :=
	\sum_{\psi \subset  \xi'_t}
	h^{(r_t)}_D (\psi)\1
\{ \bq^{(r_t)}(\psi) \in V_k(\xi'_t \setminus 
	\psi , r_t)  
\}.
	\label{e:Ftdef}
\end{align}
In this section we show that we can approximate $\PP[R_{t,k} \leq r_t]$
from above 
with  $\PP[F_t(A^o)=0]$, and from below with $\PP[F_t(A')=0]$, up to
small correction.

Before starting on this,
we give two alternative characterisations of the `local minimum'
condition in the definition of $h^{(r)}(\cdot)$ at \eqref{e:defh}.

Given distinct $q,x_1,\ldots,x_d \in \R^d$, we define the indicator
function $g(q,\{x_1,\ldots,x_d\} )$ to be 1, if and only if the following
composite
{\em hyperplane condition} holds:

	(i)
for any hyperplane $H$ that contains $q$ but
not $q + \bfe_d$
at least one of $x_1, \ldots, x_d$ is lower than $H$ \blue{(i.e.,
strictly on the opposite side of $H$ to $q+e_d$)}; and

(ii) there is no hyperplane $H$ containing both $q$ and $q + e_d$ 
such that all
points of $\{x_1,\ldots,x_d\} \setminus H$ lie on the same side of $H$.

Moreover, we define the indicator function $\tg(q,\{x_1,\ldots,x_d\})$ 
to be 1 if and only if $e_d \in  {\rm Cone} (q-x_1,\ldots,q-x_d)^o$,
the interior of the conical hull of $\{q-x_1,\ldots,q-x_d\}$.

\begin{lemma}
	\label{l:hgg}
	Let $r,a_1,\ldots,a_d \in (0,\infty)$. Then for
	all $(x_1,\ldots,x_d) \in (\RR^d)^d$
	such that $\#(\cap_{i=1}^d \partial B(x_i,ra_i)) =2$,
	 setting $\bx_i := (x_i,a_i)$ for each $i \in [d]$,
	 we have that
	\begin{align}
		h^{(r)}(\{\bx_1,\ldots,\bx_d\}) 
		& = g(q^{(r)} 
		(\{\bx_1,\ldots, \bx_d\}),\{x_1,\ldots,x_d\})
		\nonumber
		\\
		& = \tg(q^{(r)}(\{\bx_1,\ldots, \bx_d\}),\{x_1,\ldots,x_d\}).
		\label{e:hgg}
	\end{align}
\end{lemma}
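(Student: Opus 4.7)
The plan is to reduce both equalities in~\eqref{e:hgg} to the single condition
\begin{equation*}
K^* \setminus \{0\} \;\subset\; \{v \in \R^d : v_d > 0\},
\end{equation*}
where $K := \mathrm{Cone}(q - x_1, \ldots, q - x_d)$ and $K^* := \{v \in \R^d : \langle v, q - x_i\rangle \geq 0 \text{ for all } i \in [d]\}$ is its polar cone. As a preliminary step I would observe that the two-point hypothesis $\#(\cap_i \partial B(x_i, r a_i)) = 2$ forces $x_1, \ldots, x_d$ to be affinely independent (else the sphere intersection would be empty or a positive-dimensional sphere) and $q$ to lie off their affine hull; hence $q - x_1, \ldots, q - x_d$ forms a basis of $\R^d$, making $K$ a closed, pointed, full-dimensional simplicial cone. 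The bipolar theorem then yields $K^{**} = K$ together with the standard characterization
\begin{equation*}
K^o = \{y \in \R^d : \langle y, n\rangle > 0 \text{ for every } n \in K^* \setminus \{0\}\},
\end{equation*}
and applying this to $y = e_d$ identifies the condition $\tg = 1$ with the displayed inclusion.

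For $g = \tg$, the failure of the inclusion means there is a nonzero $n \in K^*$ with $n_d \leq 0$. In the case $n_d < 0$, setting $\nu := -n$ gives an upward normal ($\nu_d > 0$) to a non-vertical hyperplane $H := \{y : \langle y - q, \nu\rangle = 0\}$ through $q$ above which every $x_i$ lies (since $\langle x_i - q, \nu\rangle \geq 0$), negating~(i). In the case $n_d = 0$, the hyperplane $H := \{y : \langle y - q, n\rangle = 0\}$ is vertical and through $q$, and $\langle x_i - q, n\rangle \leq 0$ for all $i$ puts every $x_i$ in one closed half-space of $H$, negating~(ii). Running this translation in reverse yields the converse, so $g = \tg$.

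For $h^{(r)} = \tg$, I would use $|q - x_i| = r a_i$ to rewrite the condition that $q + v$ lies in $\R^d \setminus \cup_i B(x_i, r a_i)^o$ as $2\langle v, q - x_i\rangle + |v|^2 \geq 0$ for every $i$. When $\tg = 1$, compactness of $K^* \cap S^{d-1}$ yields $\delta := \inf_{n \in K^* \cap S^{d-1}} n_d > 0$, and a short compactness argument then shows that any feasible $v \neq 0$ with $|v|$ sufficiently small satisfies $v_d/|v| > \delta/2$, so $q$ is a strict local minimum. When $\tg = 0$, either $e_d \notin K$, in which case separation produces $n \in K^*$ with $n_d < 0$, or $e_d \in \partial K$, in which case a supporting hyperplane of $K$ at $e_d$ produces a nonzero $n \in K^*$ with $n_d = 0$; in both cases $q + \delta n$ satisfies $\langle \delta n, q - x_i\rangle \geq 0 \geq -|\delta n|^2/2$, hence lies in $\R^d \setminus \cup_i B(x_i, r a_i)^o$ for small $\delta > 0$, while not being strictly above $q$, contradicting the local-minimum property. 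The main technical point is the first case: the true set of feasible directions at $q$ is strictly larger than $K^*$ because of the quadratic correction $|v|^2/2$, so one must use compactness to propagate the strict positivity of the $d$-th coordinate on $K^* \cap S^{d-1}$ to all sufficiently small feasible $v$.
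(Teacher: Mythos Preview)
Your argument is correct. The route differs somewhat from the paper's: the paper proves the two equalities separately as four implications ($h^{(r)}\le g$, $g\le h^{(r)}$, $\tg\le g$, $g\le\tg$), each handled by a direct construction---writing $e_d=\sum b_i(q-x_i)$ with positive coefficients when $\tg=1$, taking a subsequential limit of normalized feasible directions when $h^{(r)}=0$, and using the nearest-point projection onto $K$ or a supporting hyperplane when $\tg=0$. You instead introduce the dual cone $K^*$ and reduce all three indicators to the single condition $K^*\setminus\{0\}\subset\{v_d>0\}$, which makes the logical structure cleaner and explains \emph{why} the three conditions coincide. The technical ingredients are essentially shared (separation and supporting hyperplanes for $\tg=0$; a compactness/limit argument on $S^{d-1}$ for the local-minimum direction; the quadratic identity $|q+v-x_i|^2=(ra_i)^2+2\langle v,q-x_i\rangle+|v|^2$), so the proofs are close cousins. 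One point worth spelling out in a final write-up is your preliminary observation that $q-x_1,\ldots,q-x_d$ form a basis: since $|p-x_i|=|q-x_i|$ for each $i$, all the $x_i$ lie on the perpendicular bisector hyperplane $H$ of $[p,q]$, so $q\notin H$ and hence $q$ is off their affine hull; and if the $x_i$ spanned only a $(d-2)$-dimensional affine subspace $L\subset H$, rotational symmetry about $L$ would force $\cap_i\partial B(x_i,ra_i)$ to be a union of circles (or empty), not a two-point set.
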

\blue{
Lemma \ref{l:hgg} is illustrated in Figure \ref{f:locmin}. There,
the lines between circle centres show that $g(q^{(r)}(\{\bx_1,\bx_2\}),
\{x_1,x_2\}) =1$ (i.e., the
hyperplane condition holds) for (a) and (b),
and the  line on (c) shows that the hyperplane condition fails in that case
since that line passes through $q^{(r)}(\bx_1,\bx_2)$ and lies below
both $x_1$ and $x_2$. The dotted lines show that in (a) and (b), an upward vertical vector
can be made as a positive linear combination of $q^{(r)}(\bx_1,\bx_2)-x_1$
and $q^{(r)}(\bx_1,\bx_2) - x_2$, showing $\tg(q^{(r)}(\{\bx_1,\bx_2\}),
\{x_1,x_2\})=1$
in those cases (but not for (c)).}
\begin{proof}[\blue{Proof of Lemma \ref{l:hgg}}]
Suppose  $x_1,\ldots,x_d \in \R^d$ with
	$\#\cap_{i=1}^d \partial B(x_i,ra_i) =2$ and
	set $q := q^{(r)}(\{\bx_1,\ldots,\bx_d\})$.
	We show first that $h^{(r)}(\{\bx_1,\ldots,\bx_d\})
	\leq g(q,\{x_1,\ldots,x_d\})$.
	Suppose that $g(q,\{x_1,\ldots,x_d\}) = 0$.  Then either 
	(i) or (ii) in the definition of the hyperplane condition
	fails. We claim that in either case $h^{(r)}(\{\bx_1, \ldots,\bx_d\})
	=0$.

	Suppose there is a hyperplane $H$ containing $\bq$ but not $\bq+ \bfe_d$
	with no $x_i$'s below $H$. 
	Let $e $ be a unit vector orthogonal to $H$ with $\langle e, e_d 
	\rangle < 0$.
	Then for all $b >0$ and all $i \in [d]$, $q+ be$ lies below
	$q$ and hence not on the same side of $H$ as $x_i$, so
	that $\| q+ be - x_i\| > \|q - x_i\| = ra_i$. Hence
	 $\bq + b e$ lies
	in $\R^d \setminus \cup_{i=1}^d B(x_i, ra_i)$
	and below $\bq$, and therefore
	$q$ is not a local minimum of 
	$\RR^d \setminus \cup_{i=1}^d B(x_i,ra_i)^o$
	so $h^{(r)}(\{\bx_1,\ldots,\bx_d\}) =0$.

	Suppose instead that
	there exists a hyperplane $H$ with $q,q + e_d \in H$
	and all points of $\{x_1,\ldots,x_d\} \setminus H$ on the same
	side of $H$. Then taking $e$ orthogonal to $H$ and away
	from all $x_i$, we have for all  $b >0$ that
	$q + b e \in \R^d \setminus \cup_{i=1}^d B(x_i,ra_i)$
	and hence $h^{(r)}(\{\bx_1,\ldots,\bx_d\}) =0$. Thus our earlier
	claim is justified, and thus
	$h^{(r)}(\{\bx_1,\ldots,\bx_d\})
	\leq g(q,\{x_1,\ldots,x_d\})$.

	Conversely, if $h^{(r)}(\{\bx_1,\ldots,\bx_d\}) =0$ then
	 $q $ is not a local minimum
of $ \R^d \setminus \cup_{i\in[d]} B(x_i, r a_i)^o
	$, so
	there is a sequence $(z_n)_{n \in \NN}$ taking values
	in $\RR^d \setminus \{o\}$
	with
	$z_n \to o$ as $n \to \infty$, such that
	$\langle z_n,e_d \rangle \leq 0$ 
	and $q + z_n \notin \cup_{i=1}^d B(x_i,ra_i)$.
	Taking $u_n := \|z_n\|^{-1}z_n$ and taking a subsequence
	if needed, we have $u_n \to u$ for some $u \in \mathbb{S}_{d-1}
	:=\partial  B(o,1)$ 
	with $\langle u,e_d \rangle \leq 0$.
	Also we must have $\langle u,q-x_i \rangle
	\geq 0$
	for each $i \in [d]$, 
	since 
	otherwise there exists $i$ such that for large $n$ we have
	$q+z_n\in B(x_i,r a_i)$. Hence, 
	writing $H$ for the hyperplane through $q$
	perpendicular to $u$,  if $q+ e_d \notin H$ we must have
	that no $x_i$ lies below $H$.
	If $q+e_d \in H$ then none of the $x_i$ lies strictly on the same side
	of $H$ as $q+u$.
	Thus the hyperplane condition fails and $g(q,\{x_1,\ldots,x_d\}) =0$.
	Thus $g(q,\{x_1,\ldots,x_d\}) \leq h^{(r)}(\{\bx_1,\ldots,\bx_d\})$,
	and combined the earlier claim this gives us the first
	equality of \eqref{e:hgg}.

	Next we show that $\tg(q,\{x_1,\ldots,x_d\}) \leq 
	g(q,\{x_1,\ldots,x_d\})$.
	For $i \in [d]$ 
	set $u_i : = q- x_i$.
	Suppose $e_d \in {\rm Cone}(u_1, \ldots, u_d)^o$.
	Write $e_d = \sum_{i=1}^d b_i u_i$ with $b_i > 0$ for all $i
	\in [d]$.
	Then for
	any hyperplane $H$ containing $q$ but not $q+ e_d$, setting $f$
	to be a downward unit vector orthogonal to $H$
	we have 
	$$
	0 > \langle f, e_d \rangle = \sum_{i=1}^d b_i \langle f, u_i \rangle, 
	$$
	so for at least one $i$ we have $\langle f, u_i \rangle  < 0$
	and hence $x_i $ lies below $H$.

	If there is a hyperplane $H$ containing $q$ and $q+e_d$ with
	all points of $\{x_1,\ldots,x_d\} \setminus H$ on the same
	side of $H$, let $f$ be orthogonal to $H$ with $\langle f, u_i\rangle
	 \leq 0$ for all $i \in [d]$. Then $0= \langle f, e_d \rangle
	= \sum_{i=1}^d b_i \langle f, u_i \rangle$, so
	$\langle f, u_i \rangle =0 $ for all $i$. This would imply that
	$x_1,\ldots,x_d$ all lie in $H$ and $\cap_{i=1}^d \partial
	B(x_i,r a_i) = 
	\{q\}$,  contrary to our assumption that $\#(\cap_{i=1}^d \partial
	B(x_i,ra_i)) = 2$.

	Thus the hyperplane condition holds so $g(q,\{x_1,\ldots,x_d\}) =1$.
	Therefore we have
	$\tg(q,\{x_1,\ldots,x_d\}) \leq g(q,\{x_1,\ldots,x_d\})$ as claimed. 

	To prove the reverse inequality, suppose 
	$\tg(q,\{x_1,\ldots,x_d\}) =0 $.

	If $e_d \notin {\rm Cone}(u_1,\ldots,u_d)$,
	  let $z$ be the nearest point to 
	$e_d $ of  ${\rm Cone}(u_1,\ldots,u_d)$,
	and let $H$ be the hyperplane through $z$ orthogonal to
	$z-e_d$. Then $o \in H$ and all of ${\rm Cone}(u_1,\ldots, u_d) 
	\setminus
	H $
	lies below $H$. Let $H':= H +q := H \oplus \{q\}$.
	Then for each $i \in [d]$,
	$q+ u_i $ is on or below $H'$ so $x_i = q-u_i$
	is not lower than $H'$. 
	Hence the hyperplane condition fails
	and $g(q,\{x_1,\ldots,x_d\})=0$. 

	If $e_d \in \partial {\rm Cone}(u_1,\ldots,u_d)$,
	let $H$ be
	a supporting hyperplane to 
	 ${\rm Cone}(u_1,\ldots,u_d)$ with $e_d \in H$. Then $o \in H$.
	 Let $H'=H + q$. Then $q, q+ e_d \in H'$ and
	 all points of $\{x_1,\ldots,x_d \} \setminus H'$ lie on the
	 same side of $H'$. Thus the hyperplane condition fails
	 so again  $g(q,\{x_1,\ldots,x_d\}) =0$.
	 Thus we have verified the second equality of \eqref{e:hgg}.
\end{proof}

\begin{lemma}
	\label{l:covUB}
	Let $\eps $ be as in the statement of
	Theorem \ref{t:main}.
	As $t \to \infty $ we have
	that
		$
		(\PP[R_{t,k}\le r_t] - \PP[F_t(A^o) =0] ) \vee 0
		= O (t^{-\eps \xxi/2} ). 
	$
\end{lemma}

\begin{proof}
	Suppose $F_t(A^o) > 0$. 
	Then we can find distinct $\bx_1,\ldots,\bx_d\in \xi'_t$ with 
	$h^{(r_t)}(\bx_1,\ldots,\bx_d)=1$ such that
	(setting $\bp= \bp^{(r_t)}(\bx_1,\ldots,\bx_d)$ and
	 $\bq= \bq^{(r_t)}(\bx_1,\ldots,\bx_d)$) we have
	$q \in V_k(\xi'_t \setminus\{\bx_1,\ldots,\bx_d\},r_t)$ and
	$\bq\in A^o$. 
	If moreover $q \in V_1(\xi''_t,r_t)$,
	then
	we have for all all small enough $b >0$ that $q + b (q-p)$ lies
	in $V_k(\xi_t,r_t) \cap A$, so that  $R_{t,k} >r_t$.

	For any point $q \in \RR^d$, by the Markov and H\"older
	inequalities, using the assumption
	$\EE[Y^{d+\eps} ]< \infty$,
	we have that
	\begin{align}
		\PP [q \in Z_1(\xi''_t,r_t)] & \leq t \int_{(t^\xxi,\infty)} 
		\lambda_d( B(q,r_ta))
		\QQ(da) = \theta_d t r_t^d \EE[Y^{d}
		{\bf 1} \{Y > t^\xxi \}]
		\nonumber \\
		& \leq 
		\theta_d t r_t^d (\E[Y^{d+\eps}])^{d/(d+\eps)}
		(\PP [ Y \geq t^\xxi])^{\eps/(d+\eps)} 
		\nonumber \\
		& \leq 
		\theta_d t r_t^d (\E[Y^{d+\eps}])^{d/(d+\eps)}
		(\E[Y^{d+\eps}]/t^{\xxi(d+\eps)})^{\eps/(d+\eps)}.
		\label{e:fromHolder}
	\end{align}
	Therefore using the independence of $\xi'_t$ and $\xi''_t$, we have that
	\begin{align*}
		\PP [F_t(A^o) >0, R_{t,k} \leq r_t]
		\leq 
		\theta_d t r_t^d \E[Y^{d+\eps}]
		t^{- \eps \xxi} =
		O( t^{- \eps \xxi/2} ),
	\end{align*}
	and the result follows.
\end{proof}

The inequality the other way is more delicate,
and will require some preparation.
We first prove a result
about the geometry of intersections of spheres with centres 
in general position (Corollary \ref{l:genpos} below).
A result along these lines is also seen in \cite[Lemma 4.1]{Janson}.

Given $\ell \in [d]$, we define an $\ell$-dimensional sphere to be a set of
the form $\partial B(x,r) \cap H$, for some $x \in \R^d$, some 
$r>0$, and some
$(\ell+1)$-dimensional affine subspace $H$ of $\R^d$ containing $x$.

\begin{lemma}
	\label{l:gen0}
	Suppose $a_1,\dots, a_{d+1} \in (0,\infty)$.
	 Let $\ell \in [d+1]$. 

	(i) If $\ell \leq d$ then for
	$\lambda_d^\ell$-almost all $(x_1,\ldots,x_\ell) \in (\R^{d})^\ell$
	the set $ S_\ell:= S_\ell(x_1,\ldots,x_\ell) := \cap_{i=1}^\ell \partial B(x_i,a_i)$ is either
	a $(d-\ell)$-dimensional sphere or the empty set.

	(ii) For $\lambda_d^{d+1}$-almost
	all $(x_1,\ldots,x_{d+1}) \in (\R^{d})^{d+1}$
	we have $S_{d+1} = \emptyset$.
\end{lemma}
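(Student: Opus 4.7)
The plan is to prove both parts together by induction on $k$. The base case $k = 1$ of (i) is immediate: $\partial B(x_1, a_1)$ is a $(d-1)$-dimensional sphere by definition.

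For the inductive step of (i), fix $k \in \{2, \ldots, d\}$ and assume (i) for $k - 1$. By Fubini it is enough to show that, for each $(x_1, \ldots, x_{k-1})$ for which $S_{k-1}$ is empty or a $(d-k+1)$-sphere, the exceptional set of $x_k \in \R^d$ is $\lambda_d$-null. The empty case is trivial, so assume $S_{k-1}$ is a sphere of radius $\rho > 0$ and center $c$ contained in an affine subspace $H \subset \R^d$ of dimension $d - k + 2$ with $c \in H$. Write $x_k = c + v + w$ with $v \in H - c$ and $w \in (H - c)^\perp$. For $y \in S_{k-1}$, the identity $\|y - c\| = \rho$ reduces the condition $y \in \partial B(x_k, a_k)$ to the single affine equation
\[
2 \langle y - c, v \rangle = \rho^2 + \|v\|^2 + \|w\|^2 - a_k^2.
\]
When $v \neq 0$ this cuts $H$ in a hyperplane at distance $|\rho^2 + \|v\|^2 + \|w\|^2 - a_k^2|/(2\|v\|)$ from $c$; its intersection with $S_{k-1}$ is then a $(d-k)$-sphere, a single point, or empty, according as this distance is less than, equal to, or greater than $\rho$. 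The bad $x_k$ therefore lie in the union of (a) the affine subspace $c + (H - c)^\perp$ of dimension $k - 2 < d$ (corresponding to $v = 0$), and (b) the zero locus of the non-trivial polynomial $(\rho^2 + \|v\|^2 + \|w\|^2 - a_k^2)^2 - 4\rho^2 \|v\|^2$ in the coordinates of $x_k$ (the tangency case). Both are $\lambda_d$-null, completing the induction.

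For (ii), apply (i) with $k = d$: outside a $\lambda_d^d$-null set of tuples, $S_d$ is either empty or a two-point set $\{p, q\}$. If $S_d = \emptyset$ then $S_{d+1} = \emptyset$ for every $x_{d+1}$; if $S_d = \{p, q\}$ then $S_{d+1}$ is nonempty only for $x_{d+1} \in \partial B(p, a_{d+1}) \cup \partial B(q, a_{d+1})$, a set of $\lambda_d$-measure zero. Fubini now finishes the proof of (ii). The main obstacle is the careful dispatch of the degeneracies in the inductive step: one must verify that $v = 0$ really cuts out a $\lambda_d$-null subset of $\R^d$, which uses $k \leq d$ so that $(H - c)^\perp$ is a proper subspace, and that the tangency condition is a genuinely non-trivial polynomial constraint in the coordinates of $x_k$.
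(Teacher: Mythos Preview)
Your proof is correct and follows essentially the same inductive strategy as the paper: both arguments reduce $S_k = S_{k-1}\cap\partial B(x_k,a_k)$ to a hyperplane section of the $(d-k+1)$-sphere $S_{k-1}$ and then dispose of the degenerate configurations as $\lambda_d$-null sets. The paper carries this out by first working out $k=2$ explicitly and then, for $k\ge 3$, intersecting $\partial B(x_k,a_k)$ with the affine hull $H$ of $S_{k-1}$ to obtain another sphere in $H$ and invoking the $k=2$ case there; you instead go directly to the affine equation $2\langle y-c,v\rangle = \rho^2+\|v\|^2+\|w\|^2-a_k^2$ and identify the exceptional $x_k$ as the union of a proper affine subspace and a polynomial hypersurface. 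Your route is slightly more algebraic and has the advantage of making the null exceptional sets fully explicit in one stroke, whereas the paper's version is more geometric but leaves the ``almost all $x_k$'' bookkeeping implicit.
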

\begin{proof}
	The result is clear for $\ell=1$.
	 Suppose $\ell=2$. Set $D= \|x_1-x_2\|$.  

Suppose also $d=2$.
	Then $S_2 = \emptyset $ if $D > a_1 + a_2 $ or 
	$D < |a_1-a_2|$. If $|a_1-a_2 | < D < a_1 + a_2$ then
	$S_2$ has two points and they are mirror to each other
	in the line containing $x_1,x_2$; this is geometrically obvious 
	and  can be verified in the special case where
	$x_1 =(0,0)$ and $x_2 = (D,0)$ by solving the two
	simultaneous equations for the two coordinates
	of any point of intersection.

	Now suppose $d \geq 3$ and $\ell =2$. Suppose
	 $|a_1-a_2 | < D < a_1 + a_2$, and assume without
	 loss of generality that $x_1 =o$ and $x_2 = De_1$,
	 where we set $e_1:=(1,0,\ldots,0)$.
	 Given $u \in \R$ and $z \in \R^{d-1}$, we have
	 $$
	 (u,z) \in S_2 \Longleftrightarrow (u,\|z\|,0\ldots,0) \in S_2.
	 $$
	 Therefore picking $u_0 \in \R,v >0 $ such that
	 $(u_0,v,0,\ldots,0) \in
	 S_2$ (by the case $d=2$ already considered there
	 is just one such choice of $u_0,v$),
	 we have that $S_2 $ is the intersection
	 of $\partial B(u_0e_1,v) $ with the hyperplane perpendicular
	 to $e_1$ through $u_0e_1$, and therefore is a $(d-2)$-dimensional
	 sphere.

	 Now suppose $d \geq \ell =3$. Then $S_3 = S_2 \cap \partial B(x_3,a_3)$
	 and by the case already considered, for almost all $(x_1,x_2)$,
	 $S_2$ is either empty or a $(d-2)$-dimensional sphere contained
	 in a $(d-1)$-dimensional affine subspace $H$ of $\RR^d$. 
	 In the latter case, since
	 $\partial B(x_3,a_3) \cap H$ is (for $\lambda_d$-almost all $x_3$)
	 either empty or a $(d-2)$-dimensional sphere
	 in $H$, by the case $\ell=2$ its intersection with  $S_2$ is,
	 for almost all choices of $x_3$, either a $(d-3)$-dimensional
	 sphere or the empty set.

	 Continuing in this way (inductively in $\ell$) we obtain the result
	 (i). Part (ii) then follows
	 from the fact that $S_d$ has at most two points for
	 $\lambda_d^d$-almost all choices of $(x_1,\ldots,x_d)$.
\end{proof}

\begin{corollary}\label{l:genpos}
 Let $r>0$, $\ell\in [d]$.
	 Almost surely, for all $\ell$-tuples
	 of distinct $\bx_1 =(x_1,a_1), \ldots , \bx_\ell =(x_\ell,a_\ell)
	 \in \xi_t$ and 
	satisfying $S_\ell \neq \emptyset$, where we set  
$$
	S_\ell:=S_\ell(\bx_1,...,\bx_\ell) := \cap_{i=1}^\ell  \partial B(x_i, ra_i),
	$$
	the set $S_\ell$ is a $(d-\ell)$-dimensional sphere. Also,
	almost surely, $S_{d+1}(\bx_1,...,\bx_{d+1})=\emptyset$ for distinct $\bx_1,...,\bx_{d+1}\in\xi_t$. 
\end{corollary}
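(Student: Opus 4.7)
The plan is to deduce the corollary from Lemma \ref{l:gen0} via the multivariate Mecke formula, exploiting that the spatial part of the intensity of $\xi_t$ is absolutely continuous with respect to Lebesgue measure. Since there are only finitely many values $k\in [d+1]$ to consider, a union bound over $k$ will finish the argument.

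Fix $k \in [d]$, and let $B_k$ denote the set of ordered $k$-tuples $(\bx_1,\dots,\bx_k)$ of distinct points of $\xi_t$ such that $S_k(\bx_1,\dots,\bx_k) \neq \emptyset$ but $S_k$ is not a $(d-k)$-dimensional sphere. By the Mecke formula applied to $\xi_t$ (with intensity $t\lambda_d \otimes \QQ$),
\begin{align*}
\EE |B_k| = t^k \int_{\R_+^k} \int_{(\R^d)^k} \mathbf{1}\{S_k(\bx_1,\dots,\bx_k) \in B_k'\}\, dx_1\cdots dx_k\, \QQ(da_1)\cdots \QQ(da_k),
\end{align*}
where $B_k'$ denotes the set of configurations for which $S_k$ is non-empty but fails to be a $(d-k)$-sphere, and $\bx_i=(x_i,a_i)$. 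For each fixed $(a_1,\dots,a_k) \in \R_+^k$ (and recalling that $r$ is a fixed positive constant, so the radii $ra_i$ are determined), Lemma \ref{l:gen0}(i) applied to the radii $ra_1,\dots,ra_k$ says the inner $\lambda_d^k$-integral vanishes. Hence $\EE|B_k|=0$, so almost surely $B_k=\emptyset$, giving the first assertion for this $k$.

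For the second assertion, apply the same reasoning at level $k=d+1$: by Mecke,
\begin{align*}
\EE \bigl|\{(\bx_1,\dots,\bx_{d+1})\in \xi_t^{d+1}_{\neq} : S_{d+1}\neq\emptyset\}\bigr|
= t^{d+1} \int_{\R_+^{d+1}} \int_{(\R^d)^{d+1}} \mathbf{1}\{S_{d+1}\neq\emptyset\}\, d\bx\, d\QQ,
\end{align*}
and for each fixed tuple of radii Lemma \ref{l:gen0}(ii) makes the inner integral zero. The union of the null events over the finite collection $k\in [d+1]$ is again null, establishing both claims. There is no substantive obstacle here; the only point that must be handled with care is ensuring one integrates over spatial coordinates \emph{after} fixing the radii, so that Lemma \ref{l:gen0} (which is stated for fixed radii) genuinely applies.
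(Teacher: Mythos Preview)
Your proposal is correct and follows essentially the same approach as the paper: the paper's proof is the one-line remark that the result ``follows from Lemma~\ref{l:gen0} and application of the multivariate Mecke formula and Fubini's theorem,'' which is exactly what you have spelled out in detail. Your explicit observation that one fixes the marks $(a_1,\dots,a_k)$ first and then applies Lemma~\ref{l:gen0} to the spatial integral is precisely the role Fubini plays in the paper's sketch.
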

\begin{proof} 
	This  follows from Lemma \ref{l:gen0}
	and application of the multivariate Mecke formula (see e.g.
	\cite{LP}) and Fubini's theorem.
\end{proof}

Next we use a percolation argument to show that with high probability,
all components of $V_k(\xi'_t,r_t)$ intersecting $A$ are confined
to the set $A' := A'(t):= A \oplus B(o,\sqrt{r_t})$.
\begin{lemma}
	\label{l:perc}
	Given $t >0$, let $\scr V_{k,t}$
	be the event that every connected component of the vacant set 
	$V_k(\xi'_t, r_t)$ that  intersects $A$
	is contained in $A'$.
	Then there exists $c >0 $ such that for all large enough
	$t$, $\PP[\scr V_{k,t}]\ge 1 - \exp(-c t^{1/(3d)})$.
\end{lemma}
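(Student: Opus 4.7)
The plan is a standard blocking argument using small cubes together with a Poisson union bound. Since we have arranged $\PP[Y=0]=0$, I would first fix $y_0>0$ with $p_0 := \PP[Y>y_0] > 0$. Setting $s_t := y_0 r_t/(3\sqrt d)$, I would partition $\RR^d$ into closed axis-aligned cubes of side $s_t$; call a cube $Q$ \emph{good} if it contains some $(x,a) \in \xi_t$ with $a > y_0$, and \emph{bad} otherwise. Any good cube lies entirely in $Z(\xi_t, r_t)$, since at any witnessing $(x,a)$ the ball $B(x, r_t a) \supset B(x, y_0 r_t)$ has radius exceeding $\diam(Q) = y_0 r_t /3$.

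For the geometric step, suppose $\scr V_t$ fails. Then some vacant connected component meets both $A$ and $\RR^d \setminus A'$; since $V(\xi_t, r_t)$ is open, its components are path-connected, so I can find a continuous arc $\gamma \subset V(\xi_t, r_t)$ joining a point $x_1 \in A$ to a point $x_2$ with $\dist(x_2, A) > \sqrt{r_t}$. Every cube visited by $\gamma$ must be bad, else $\gamma$ would enter $Z(\xi_t, r_t)$. Listing these cubes in order as $Q_1,\dots,Q_n$ and picking $z_i \in Q_i \cap Q_{i+1}$, a telescoping triangle inequality gives $\|x_1-x_2\| \le \sum_{i=1}^n \diam(Q_i) = n s_t \sqrt d$, so $n \ge \sqrt{r_t}/(s_t\sqrt d) = 3/(y_0\sqrt{r_t}) =: N_t$. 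Moreover consecutive $Q_i$ share at least a vertex (are $*$-adjacent) and $Q_1 \ni x_1 \in A$.

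For the probabilistic step, the independence of the Poisson process on disjoint regions makes distinct cubes bad independently, each with probability $\exp(-t s_t^d p_0) = \exp(-\gamma t r_t^d) \le t^{-\eta}$ for large $t$, where $\gamma := p_0(y_0/(3\sqrt d))^d$ and $\eta := \gamma/(2\alpha)$ (using $t r_t^d \ge \log t/(2\alpha)$ for large $t$). The number of cubes meeting $A$ is at most $C_A s_t^{-d} = O(t/\log t)$, and each $*$-connected chain of length $N_t$ has at most $(3^d-1)^{N_t-1}$ continuations from a given starting cube. A union bound then gives
\begin{align*}
\PP[\scr V_t^c] \le C_A s_t^{-d} (3^d-1)^{N_t} t^{-\eta N_t} = \exp\bigl(O(\log t) - N_t(\eta\log t - \log(3^d-1))\bigr).
\end{align*}
Since $N_t = \Theta\bigl((t/\log t)^{1/(2d)}\bigr)$, for all large $t$ one has $N_t \log t \ge t^{1/(3d)}$, yielding the required $\exp(-c\, t^{1/(3d)})$ bound after absorbing the polynomial prefactor.

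The main obstacle is the geometric step: converting the existence of a vacant crossing of the shell into a quantitative lower bound on the number of $*$-adjacent bad cubes traversed. Once that conversion is in hand, Poisson independence and a routine union bound suffice, with substantial slack in the exponent (the argument actually delivers a bound of the form $\exp(-c t^{1/(2d)+o(1)})$, well beyond what is claimed).
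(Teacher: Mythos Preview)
Your approach is essentially the paper's: partition into cubes of side $\Theta(r_t)$, call a cube bad if it lacks a Poisson point with sufficiently large mark, observe that a vacant crossing of the shell $A'\setminus A$ forces a large $*$-connected family of bad cubes rooted at $A$, and finish with a Peierls-type union bound. One small wrinkle worth tightening: your telescoping bound $n\ge N_t$ counts cubes \emph{with repetition} along $\gamma$, whereas the independence estimate $t^{-\eta N_t}$ requires $N_t$ \emph{distinct} bad cubes; the paper sidesteps this by counting lattice animals directly (using the standard bound $\#\{\text{animals of size }n\ni o\}\le c_1^n$), and you can patch your version by taking instead the shortest (hence self-avoiding) $*$-path between the cubes containing $x_1$ and $x_2$, which has length $\ge N_t-O(1)$ by the same diameter consideration.
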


\begin{proof}
	Choose $\delta >0$ such that 
	$\PP[Y >  \sqrt{d} \delta  ] > 2 \delta$.
	Given $t$, partition $\RR^d$ into cubes
	of side length $\delta r_t$ indexed by $\ZZ^d$ such
	that for each $z \in \ZZ^d$, the
	cube indexed by $z$ is centred on $\delta r_t z$.
	We say a collection of cubes is a {\em path}
	if the indexing lattice points make a path
	in the graph $(\ZZ^d,\sim)$ where for
	$x,y \in \ZZ^d$ we put $x \sim y$ if and only if 
	$\|x-y\|_\infty=1$ (i.e., allow diagonal connections).  
	If $\scr V_{k,t}$ does not occur, then  there exists a
	path of cubes
	such that
\begin{itemize}
\item[1)]  at least one constituting cube $Q$ of the path intersects 
	$A$;
\item[2)]  the cardinality of the path
	is at least $n(t) :=
	\lceil \sqrt{r_t} / (\sqrt{d} \delta r_t) 
		\rceil = \lceil r_t^{-1/2}/(\sqrt{d}\delta ) \rceil$;
\item[3)] each constituting cube $Q$ of the animal satisfies $Q \cap
	V_k(\xi'_t,r_t) \neq \emptyset$.
\end{itemize}
	Item 3) implies that $\#(\xi'_t \cap (Q\times [\sqrt{d}\delta ,\infty)]))
	\leq (k-1)$
	because otherwise $Q\cap V_k(\xi'_t,r_t) =\emptyset$. 
	Observe that provided $t$ is sufficiently large,
	$t (\lambda_d \otimes \QQ)(Q \times [
		\sqrt{d}\delta,t^\xxi]) \geq t \delta  (\delta r_t)^d $,
	and thus by e.g. \cite[Lemma 1.2]{Pen}
	$$
	\PP[ \#(\xi'_t \cap (Q \times [\sqrt{d}\delta,\infty))) \leq k-1]
	\leq \exp( - (t/2)\delta^{d+1} r_t^d).
	$$
	For all $n \in \NN$
	the number of paths of cardinality $n$ starting from the origin
	is at most $3^{dn}$.
	Hence there is a constant $c'$ such that
	the number of paths satisfying 1) and 2)
	is at most $c' r_t^{-d} 3^{dn(t)}$,
	and hence for $t$ large is at most
	$t 3^{dn(t)}$.
	Provided $t$ is also large enough so that
	$\delta^{d+1} t r_t^d > 4 \log (3^d)$ and
	$r_t^{-1/2} /(d^{1/2} \delta) \geq t^{1/(3d)}$,
	applying the union bound leads to
\begin{align*}
	\PP[\scr V_{k,t}^c]
	& \le
	t 
	3^{dn(t)}
	\exp(- n(t) (t/2) \delta^{d+1}  r_t^{d} )
	\\
	& \leq t 
	\exp (- (\log 3^d) n (t))
	\\
	& \le t \exp( - (\log 3^d) t^{1/(3d)} ), 
\end{align*}
	and the result follows.
\end{proof}


We are now ready to provide an inequality in the opposite direction
to Lemma \ref{l:covUB}.

\begin{lemma}\label{l:sc_cov}
	For all $t$,
	it holds that $\PP[\{R_{t,k} > r_t\} \cap \scr V_{k,t}]
	\leq \PP[F_t(A')  > 0].
	$
\end{lemma}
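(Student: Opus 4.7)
The plan is to show that on $\{R_t > r_t\} \cap \scr V_t$ one can exhibit distinct $\bx_1, \ldots, \bx_d \in \xi_t$ making the indicator inside the sum at \eqref{e:Ftdef} equal to $1$, which forces $F_{A', t} \geq 1$. On this event, pick any $z \in A \cap V(\xi_t, r_t)$ and let $W$ be the closure of the connected component of $V(\xi_t, r_t)$ containing $z$; then $W \cap A \neq \emptyset$ and $W \subset A'$ by $\scr V_t$, so $W$ is compact. Let $q$ be the lexicographically smallest point of $W$ attaining the minimum of $\pi_d$ on $W$. Since $V(\xi_t, r_t)$ is open, any point in it has a neighborhood of strictly lower points, so $q$ must lie in $\partial Z(\xi_t, r_t)$. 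Set $I := \{\bx = (x, a) \in \xi_t : q \in \partial B(x, r_t a)\}$.

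First I would argue $|I| = d$ almost surely. Corollary \ref{l:genpos} gives $|I| \leq d$. For the reverse, suppose $|I| = k < d$; by the same corollary $\cap_{\bx \in I}\partial B(x, r_t a)$ is a $(d-k)$-sphere through $q$, whose tangent space at $q$ has dimension at least $1$. Generically, this tangent space contains a direction with strictly negative $d$-th component; moving slightly from $q$ along this direction remains on every sphere of $I$, hence inside $\R^d \setminus \cup_{\bx \in I}B(x, r_t a)^o$, and also strictly outside every ball of $\xi_t \setminus I$ since those are bounded away from $q$. This produces a point of $W$ below $q$, contradicting minimality.

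Writing $I = \{\bx_1, \ldots, \bx_d\}$, Corollary \ref{l:genpos} gives $\cap_{i=1}^d \partial B(x_i, r_t a_i) = \{\tilde p, \tilde q\}$ with $\tilde p$ lower. All centres $x_i$ lie on the perpendicular bisector of $[\tilde p, \tilde q]$, so a direct computation gives $\langle \tilde p - \tilde q, \tilde p - x_i\rangle = \|\tilde q - \tilde p\|^2/2 > 0$ for each $i$; hence the downward ray from $\tilde p$ in direction $\tilde p - \tilde q$ stays strictly outside every ball in $I$ and would produce points of $W$ below $\tilde p$. This rules out $q = \tilde p$, so $q = \tilde q = q^{(r_t)}(\bx_1, \ldots, \bx_d)$. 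Near $q$, the set $\R^d \setminus \cup_i B(x_i, r_t a_i)^o$ agrees, to first order, with the cone $q + \{v : \langle v, q - x_i\rangle \geq 0 \text{ for all } i\}$; since $q$ is the lowest point of the local piece of $W$, this cone lies in the upper half-space, and after ruling out the measure-zero case $e_d \in \partial {\rm Cone}(q - x_i)$ we obtain $e_d \in {\rm Cone}(q - x_i)^o$, which by Lemma \ref{l:hgg} gives $h^{(r_t)}(\bx_1, \ldots, \bx_d) = 1$. Finally $q \in V(\xi_t \setminus I, r_t)$, because for $\bx \notin I$ we have $q \notin \partial B(x, r_t a)$ by definition of $I$ and $q \notin B(x, r_t a)^o$ since $q \in \overline{V(\xi_t, r_t)}$; and $q \in A'$ is immediate from $q \in W \subset A'$. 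These are precisely the indicators appearing in \eqref{e:Ftdef}, so $F_{A', t} \geq 1 > 0$.

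The main obstacle is the handling of generic-position edge cases: almost sure uniqueness (in height) of the lowest point of $W$ so that $q$ qualifies as a strict local minimum per the definition preceding \eqref{e:defh}, the existence of a downward tangent direction in the $|I| = k < d$ argument, and the exclusion of the boundary case $e_d \in \partial {\rm Cone}(q - x_i)$. Each is a measure-zero exceptional event which, via the multivariate Mecke formula applied exactly as in the proof of Corollary \ref{l:genpos}, contributes $0$ to the probability and can be discarded without affecting the stated inequality.
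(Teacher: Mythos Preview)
Your approach is essentially the paper's: pick a vacant component meeting $A$, take a lowest point $q$ of its closure, show $q$ lies on exactly $d$ spheres, identify $q$ with $q^{(r_t)}$, and verify the indicators in \eqref{e:Ftdef}. Two small points are worth flagging. First, the phrase ``remains on every sphere of $I$'' is not literally true---moving along a tangent direction to the intersection moves you strictly \emph{outside} each closed ball (by strict convexity), which is what you actually need for the conclusion $q+\delta v \in \R^d\setminus\cup_{\bx\in I} B(x,r_ta)^o$. Second, your handling of the edge cases (horizontal tangent space when $|I|<d$; $\pi_d(\tilde p)=\pi_d(\tilde q)$; $e_d\in\partial\mathrm{Cone}$) by appeal to Mecke is not quite immediate, because the point $q$ at which these conditions are evaluated is the lowest point of a vacant component and hence depends on the entire configuration $\xi_t$, not just on a fixed $k$-tuple. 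The paper sidesteps all of these genericity issues with direct arguments: for $|I|=m<d$ it observes that \emph{every} nearby point of the tangent affine space (not just a downward one) lies in the open vacant set, picks such a $q'$ at height $\le\pi_d(q)$, and uses that an open ball about $q'$ lies in $V_z$ to drop strictly below; for $p$ not strictly lower than $q$ it uses the same open-ball trick on $q+\delta(q-p)$. These tweaks remove the need for your final paragraph.
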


\begin{proof}
	Suppose that event $\scr V_{k,t}$ occurs, and $R_{t,k} >r_t$.
	Then we can and do choose $z \in V_k(\xi_t,r_t) \cap A
	\subset V_k(\xi'_t,r_t) \cap A$.
	Let $V_{z}$ denote the 
	closure of the connected component of the vacant set
$V_k(\xi'_t,r_t)$ containing $z$. Then, since
	we assume $\scr V_{k,t}$ occurs,  
	$V_z \subset A'$.
	
	Let $\bq$ denote a lowest point of $V_z$, i.e.
	$\langle \bq ,e_d \rangle \le \langle x,e_d \rangle$ 
	for all $x \in V_z$. 
	Then $q \in A'$.

For any simple counting measure $\mu$ on $\RR^d\times \R_+$ and $r>0$, we
	define  SPBM$(\mu,r)$ to be the collection of all
	balls of the form $B(x,r a)$ with $(x,a) \in \mu$.

We claim that almost surely, $\bq$ lies on the boundary of exactly $d$ balls in 
$\mathrm{SPBM}(\xi'_t,r_t)$, and in the interior of exactly $k-1$ of these
	balls. 
	It is clear that $\bq \in B^o$ for at most $k-1$ balls $B\in
	\mathrm{SPBM}(\xi'_t,r_t)$ and $\bq\not\in V_k(\xi'_t,r_t)$;
	otherwise  either $\bq$ lies in the interior of the $k$-occupied set
	$Z_k(\xi'_t,r_t)$ or $\bq$ is not a lowest point of $V_z$.
	Hence $\bq$ has to lie on at least one sphere. 

Suppose  that $\bq$ lies on  exactly $m$ spheres with $1\le m \le d-1$. In particular, it lies on the hyperplanes tangent to the spheres at $\bq$. Let $I$ denote the intersection of these hyperplanes. Then the dimension of $I$ is at least 1. For some small $\ep>0$, any point in $(B(\bq,\ep)\cap I) \setminus \{\bq\}$ 
	lies in the interior of $V_z$. Necessarily there exists a point 
	$\bq' \in B(q,\eps ) \cap I$ that is as low as $\bq$ such that for some
	$\ep'>0$, we have $B(\bq',\ep')\subset V_z$. 
	It follows that the point $\bq' - \ep' e_d/2\in V_z$ is strictly lower than $\bq$, a contradiction. Also, by Corollary \ref{l:genpos} 
	the intersection of  $d+1$ distinct spheres is empty.
	Therefore $\bq $ lies on the boundary of exactly $d$
	balls in SPBM$(\xi'_t,r_t)$, denoted $B_1,\ldots,B_d$ say.
	Moreover, if $q$ lies in the interior
	of fewer than $k-1$ balls, denoted
	say $B'_1,\ldots,B'_{\ell}$ with $\ell < k-1$,
	then by repeating the above argument
	 for $m=d-1$ 
	we see there is  a point $q'$ near $q$ and lower than $q$ that
	lies in none of $B_1,\ldots, B_{d-1}$ and therefore in the interior
	of at most $k-1$ balls, namely $B'_1, \ldots, B'_\ell$ and
	$B_d$. This gives a contradiction, which
	 completes the proof of the claim.

By Corollary \ref{l:genpos} with $\ell =d$ and $r=r_t$,
	there exist distinct 
	$\bx_1,...,\bx_d\in\xi_t$ with $S_d(\bx_1,...,\bx_d)=\{\bq, \bp\}$ where $\bp$ is the mirror of $\bq$ with respect to the affine
	space generated by $\bx_1,...,\bx_d$ by symmetry of balls.
	For $i \in [d] $ we write $\bx_i = (x_i,a_i)$ with
	$x_i \in \R^d$ and $a_i \in \R_+$.
	Observe that
$q \in V_k(\xi'_t \setminus \{\bx_1,\ldots,\bx_d\})$, since
$q$ lies on exactly $d$ spheres.

We claim that $\bp$ is strictly lower than
	$\bq$.  Indeed, by convexity of balls, the segment $[\bp,\bq]$ is contained in the ball associated with $\bx_j$ for all $j \in [d]$,
	and any other point on the line containing $[\bp,\bq]$ is not covered by any of the $d$ balls.  
	If $p$ were not strictly lower than $q$, then for
	all sufficiently small 
	$\delta >0$, $q+ \delta (q-p)  \in V_k(\xi'_t,r_t)$
	and taking a further point just
	below $q+ \delta (q-p)$ would give  us a point in $V_z$ 
	that is lower than $\bq$, and hence a contradiction.

	Next we claim that $h^{(r_t)}(\{\bx_1,\ldots,\bx_d\})=1$.
	Indeed, since $q \in V_k(\xi_t \setminus \cup_{i=1}^d \bx_i)$
	and $q$ is a lowest point of $V_z$, $q$  must be a local minimum
	of the set $\RR^d \setminus \cup_{i=1}^d B(x_i, r_t a_i)^o$.
	Here we use the fact that for all small $\delta >0$ the
	set $B(q,\delta) \setminus \cup_{i=1}^d B(x_i,r_t a_i)^o$ is connected.

	It follows from all the above that
 $F_t(A') >0$, since the set $\{\bx_1,\ldots,\bx_d\} $ contributes
 to the sum on the right hand side of \eqref{e:Ftdef}.
\end{proof}

\section{Convergence of expectation}
\label{s:convexp}

In this section we prove the convergence of the expectation of 
$F_t(D)$ (defined at \eqref{e:Ftdef}) as $t\to\infty$,
for bounded Borel $D \subset \R^d$ (allowed to vary with $t$).
Recall
from Section \ref{secintro}
that we set $\alpha := \theta_d \EE[Y^d]$.
Given $(a_1,a_2, \ldots,a_d) \in \R_+^d$, define
\begin{align}
	G(a_1,\ldots,a_d)
	:= \frac{1}{d!}
	\int_{\RR^{d(d-1)}} h^{(1)}(\{(o,a_1),(x_2,a_2),\ldots,(x_d,a_d)\})
	\lambda_d^{d-1}(d(x_2,\ldots,x_d)),
	\label{e:cdef}
\end{align} 
and define the constant
	  $c_0 := \int_{\R_+^d} 
	  G(a_1,\ldots,a_d)
	  \prod_{i=1}^d\QQ(da_i)$ (this constant depends
	 on $d$ and $\QQ$).

\begin{proposition} 
	\label{p:meanlim}
	Suppose for all $t >1$ that $D(t) \subset \R^d$
	is Borel-measurable with
	$\lambda_d(D(t)) =O(1)$ as $t \to \infty$.
	then as $t \to \infty$,
	\begin{align}
		\EE[F_t(D(t))] -  c_0 
		\alpha^{1-d} ((k-1)!)^{-1}
		\lambda_d(D(t)) e^{-\beta} =O
		\Big(\frac{1}{\log t}\Big).
		\label{e:limEF}
		\end{align}
\end{proposition}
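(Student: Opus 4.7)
The plan is to compute $\EE[F_{D(t),t}]$ in essentially closed form via the multivariate Mecke formula, and then to Taylor-expand the resulting expression using \eqref{e:r_t}. The refinement of $r_t$ in \eqref{e:r_t} compared with \eqref{e:r_t0} is designed precisely so that the otherwise dominant $O((\log\log t)/\log t)$ correction in this expansion cancels, leaving an error of order $1/\log t$.

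First, the multivariate Mecke formula (see \cite{LP}) applied to the sum in \eqref{e:Ftdef} yields
\begin{align*}
	\EE[F_{D(t),t}]
	= \frac{t^d}{d!} \int h^{(r_t)}_{D(t)}((x_i,a_i)_{i=1}^d)\,
	\PP\bigl[q^{(r_t)}((x_i,a_i)_{i=1}^d) \in V(\xi_t,r_t)\bigr]
	\prod_{i=1}^d dx_i\,\QQ(da_i).
\end{align*}
Since $\xi_t$ is Poisson with intensity $t\la_d\otimes\QQ$, and $q \in Z(\xi_t,r_t)$ iff some $(x,a)\in\xi_t$ satisfies $x \in B(q,r_t a)$, the void probability equals $\exp(-t\EE[\la_d(B(q,r_t Y))]) = e^{-\al t r_t^d}$, independently of $q$.

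Second, I reduce the spatial integral by two successive changes of variables for each fixed $(a_1,\ldots,a_d)$. The substitution $(x_1,\ldots,x_d)\mapsto(x_1,y_2,\ldots,y_d)$ with $y_i := x_i - x_1$ is unimodular; by translation invariance of $h^{(r)}$ and of the ``lower than'' order, the integrand becomes $h^{(r_t)}((o,a_1),(y_i,a_i)_{i\ge 2})$, while $q^{(r_t)}((x_i,a_i)_i) = x_1 + \tilde q(y_2,\ldots,y_d;a_1,\ldots,a_d)$, so integrating the constraint $q^{(r_t)} \in D(t)$ over $x_1 \in \RR^d$ contributes the factor $\la_d(D(t))$. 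Next, the rescaling $z_i := y_i/r_t$ together with the scaling identity $h^{(r_t)}((o,a_1),(r_t z_i,a_i)_{i\ge 2}) = h^{(1)}((o,a_1),(z_i,a_i)_{i\ge 2})$ produces a Jacobian $r_t^{d(d-1)}$; inserting $\prod\QQ(da_i)$ and recognising \eqref{e:cdef} together with the definition of $c_0$, the remaining integral collapses to $d!\,c_0$. Altogether,
\begin{align*}
	\EE[F_{D(t),t}] = c_0\,\la_d(D(t))\,\al^{1-d}\,t L_t^{d-1} e^{-L_t},\qquad L_t := \al t r_t^d.
\end{align*}

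Third, I expand $t L_t^{d-1} e^{-L_t}$ around $e^{-\beta}$. Writing $a := \log t$ and $b := \log\log t$, for $t$ large \eqref{e:r_t} gives $L_t = a + (d-1)b + \beta + (d-1)^2 b/a$, so
\begin{align*}
	t L_t^{d-1} e^{-L_t}
	= e^{-\beta}\,(L_t/a)^{d-1}\,\exp\bigl(-(d-1)^2 b/a\bigr).
\end{align*}
Expanding $(L_t/a)^{d-1} = 1 + (d-1)^2 b/a + (d-1)\beta/a + O((b/a)^2)$ and $\exp(-(d-1)^2 b/a) = 1 - (d-1)^2 b/a + O((b/a)^2)$, the two $O(b/a)$ contributions cancel exactly, so the product equals $1 + O(1/\log t)$. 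Combined with $\la_d(D(t)) = O(1)$, this yields \eqref{e:limEF}. The one subtle point is precisely this cancellation: it is what justifies the extra term $(d-1)^2(\log\log t)/\log t$ inserted in \eqref{e:r_t} and sharpens the rate from the $O((\log\log t)/\log t)$ that would arise if one used \eqref{e:r_t0} to the claimed $O(1/\log t)$.
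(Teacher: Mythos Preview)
Your proof is correct and follows essentially the same route as the paper's own argument: Mecke formula, translation plus scaling to extract the factor $d!\,c_0\,\lambda_d(D(t))\,r_t^{d(d-1)}$, and then a Taylor expansion of the resulting closed-form expression using \eqref{e:r_t}. Your explicit display of the cancellation between the $(d-1)^2 b/a$ term in $(L_t/a)^{d-1}$ and the one in $\exp(-(d-1)^2 b/a)$ is a nice touch that the paper leaves implicit.
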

 \begin{proof}
	 Notice that for all $x \in \RR^d$ we have
	 that $\xi'_t(\{(y,b): x \in B(y,r_t b)\})$
	 is Poisson distributed with mean 
	 $ t \int_{[0,t^\xxi]} \theta_d (br_t)^d \QQ(db)$,
	 and by the calculation at \eqref{e:fromHolder},
	 $ t \int_{[0,t^\xxi]} \theta_d (br_t)^d \QQ(db)
	 = \alpha tr_t^d + O(t^{-\eps \xxi/2})$,
	 implying
	 $\PP[x \in V_k(\xi'_t,r_t)] = ((\alpha t r_t^d)^{k-1}/(k-1)!) 
	 e^{-\al t r_t^d}(1
	 + O((\log t)^{-1}))$,
	 and this probability does not depend on $x$.
	 Therefore
	 by \eqref{e:Ftdef} and
	 the multivariate Mecke formula, writing just $D$ for $D(t)$,
	 we have 
 \begin{align} 
	 \EE[F_t(D)] = \frac{t^d
	 e^{-\alpha t r_t^d}
	 (\alpha t r_t^d)^{k-1}(1 + O(\frac{1}{\log t})
	 )
	 }{d!(k-1)!} 
	 \int_{\R_+^d} \prod_{i=1}^d\QQ(da_i)
	 \int_{(\R^d)^d}
	 \nonumber\\
	 h_D^{(r_t)}(\{(x_1,a_1),\ldots,(x_d,a_d)\})
	 \lambda_d^{d}(d(x_1,\ldots,x_d)) 
	 .
	 \label{e:0415b}
 \end{align}

It is easy to see that for $a_1,\dots,a_d$ fixed,
	 \begin{align*}
		 &\int_{(\R^d)^d} h_D^{(r_t)}(\{(x_1,a_1),\ldots,(x_d,a_d)\}) 
		 \lambda_d^{d}(d(x_1,\ldots,x_d))
		 \\ & = 
		 \la_d(D)\int_{\RR^{d(d-1)}} h^{(r_t)}(\{(o,a_1),(x_2,a_2),
		 \ldots,(x_d,a_d)\}) 
		 \lambda_d^{d-1}(d(x_2,\ldots,x_d)),
	 \end{align*}
	 and moreover
	 \begin{align*}
		 h^{(r_t)}(\{(o,a_1), (x_2,a_2),\ldots , (x_d,a_d)\})
		 = h^{(1)}(\{(o,a_1), (r_t^{-1} x_2,a_2),\ldots, 
		 (r_t^{-1} x_d,a_d)\}).
	 \end{align*}
	 Therefore a change of variables and \eqref{e:cdef} gives us
\begin{align*}
	&\int_{(\R^d)^d} h_D^{(r_t)}(\{(x_1,a_1),\ldots,(x_d,a_d)\})
	\lambda_d^d(d(x_1,\ldots,x_d))
	= d! r_t^{d(d-1)} \lambda_d(D) G(a_1,\ldots,a_d).
\end{align*}
	 Then using the definition just after \eqref{e:cdef} of $c_0$, and 
	  \eqref{e:0415b}
	  followed by \eqref{e:r_t}
	 we obtain that for $t$ large,
\begin{align*}
	\EE[F_t(D)] & = 
	t^d
	e^{-\al t r_t^d} 
	r_t^{d(d-1)} \la_d(D) (\alpha t r_t^d)^{k-1} ((k-1)!)^{-1}
	c_0 \Big(1+ O \big( \frac{1}{\log t}\big) \Big) \\
	 & =
	 \Big( \frac{
	 c_0 \lambda_d(D) e^{-\beta}  
		 t^{d-1}
	 (\alpha t r_t^d)^{k-1}}{ (k-1)! (\log t)^{d+k-2}}
	\Big)
	\exp \Big(- \frac{(d+k-2)^2 \log \log t}{
		\log t} \Big) 
		\Big( 1 + O\big(\frac{1}{\log t}\big) \Big)
		\\
	& \quad \quad
	\times \Big( \frac{\log t + (d+k-2) \log \log t + \beta + (d+k-2)^2
		(\log \log t)/\log t}{ \alpha  t }\Big)^{d-1}
		\\
		&  =
		c_0 \lambda_d(D) e^{-\beta} ((k-1)!)^{-1} \alpha^{1-d}
		\Big( 1+ O \big( \frac{1}{\log t} \big) \Big),  
\end{align*}
as required.
 \end{proof}
	 
The proof of Theorem \ref{t:main} in the next section will
in fact show that $\PP[ R_{t,k} \leq r_t] \to \exp (-c_0 \alpha^{1-d}
\lambda_d(A) e^{-\beta})$ as $t \to \infty$. Thus 
	for consistency with the known result of Theorem \ref{t:H-J}
	the constant $c_0$ must satisfy 
	$ (c_0 \alpha^{1-d})/(k-1)! = c_{d,k,Y}, $
	where $c_{d,k,Y}$ was given at \eqref{e:cdkY}.
	i.e. 
	\begin{align}
	c_0 =  
		\frac{ (\E[Y^{d-1}])^{d}
		\theta_d^{d-1}}{d!
		} 
	\Big( \frac{\sqrt{\pi} \Gamma(1+d/2)}{\Gamma((1+d)/2)} \Big)^{d-1}
	=
		\frac{ (\E[Y^{d-1}])^{d} \pi^{(d^2-1)/2}
		}{d! (\Gamma((1+d)/2))^{d-1}}. 
		\label{e:c0}
	\end{align}
Nevertheless, to make this paper more self-contained,
we sketch an argument to verify \eqref{e:c0} directly.
	\begin{lemma}[Computation of $c_0$]
		\label{l:compc0} Given
		$s_1,\ldots,s_d \in \R_+$, 
		it holds that
		\begin{align}
			G(s_1,\ldots,s_d) =
		\frac{ \pi^{(d^2-1)/2} }{d! (\Gamma((1+d)/2))^{d-1}}
			\prod_{i=1}^d
			s_i^{d-1},
			\label{e:G}
		\end{align}
			and
		\eqref{e:c0} holds.
	\end{lemma}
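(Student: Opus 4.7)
The plan is to reduce the integral defining $G$ to a universal constant $C$ times $\prod_i a_i^{d-1}$ via a coordinate change that parametrizes the $d$ sphere centers $x_i$ by a common intersection point $q$ together with the unit outward normals $u_i := (q - x_i)/a_i$ at $q$. Concretely I will use $x_i = q - a_i u_i$ with $(q, u_1, \ldots, u_d) \in \R^d \times (S^{d-1})^d$; the resulting map $\psi$ is generically $2$-to-$1$, the two intersection points of $\cap_i \partial B(x_i, a_i)$ supplying the two preimages. A block-Schur computation of $(d\psi)^T (d\psi)$ using orthonormal frames on $T_{u_i} S^{d-1}$ then yields $|\det d\psi| = |\det [u_1, \ldots, u_d]|\prod_{i=1}^d a_i^{d-1}$.

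The key geometric observation is that $e_d \in \mathrm{Cone}(u_1, \ldots, u_d)^o$ forces $(q, u)$ to lie in the ``upper preimage'', i.e., $q$ is automatically the upper of the two intersections. Indeed, letting $n$ be the unit normal to $H := \mathrm{aff}(x_1, \ldots, x_d)$ oriented so that $q$ lies at height $h > 0$ above $H$, the identity $q - \tilde q = 2hn$ (with $\tilde q$ the other intersection) gives $u_i \cdot n = h/a_i$ for each $i$; expanding $e_d = \sum b_i u_i$ with $b_i > 0$ then yields $e_d \cdot n = h \sum b_i/a_i > 0$, i.e., $q$ lies above $H$. Combined with the measure-preserving reflection $\sigma$ swapping the two preimages (and preserving $|\det d\psi|$), Lemma \ref{l:hgg}, and the $2$-to-$1$ change of variables, this gives
\[
d!\, G(a_1, \ldots, a_d) = \prod_{i=1}^d a_i^{d-1} \cdot C,
\qquad C := \int_{(S^{d-1})^d} \mathbf{1}\{e_d \in \mathrm{Cone}(u_1, \ldots, u_d)^o\} \, |\det U| \, du_1 \cdots du_d.
\]

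To evaluate $C$, I apply sign symmetry: for each $\epsilon \in \{\pm 1\}^d$ the substitution $u_i \mapsto \epsilon_i u_i$ preserves $|\det U| \, du_1 \cdots du_d$ while sending $\mathrm{Cone}(u_1, \ldots, u_d)$ to $\mathrm{Cone}(\epsilon_1 u_1, \ldots, \epsilon_d u_d)$. Since the $2^d$ signed cones partition $\R^d$ for linearly independent $u_i$, summing over $\epsilon$ gives $2^d C = \int_{(S^{d-1})^d} |\det U| \, du_1 \cdots du_d = \omega_{d-1}^d \cdot \E[|\det U|]$, where $\omega_{d-1} = 2\pi^{d/2}/\Gamma(d/2)$ and the expectation is taken under $U_i$ iid uniform on $S^{d-1}$. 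The expectation $\E[|\det U|]$ I compute via the polar decomposition $Z_i = R_i U_i$ of iid standard Gaussian vectors in $\R^d$, together with the Gram-Schmidt identity $\E[|\det Z|] = \prod_{k=1}^d \E[\chi_k]$ (with $\chi_k$ chi-distributed with $k$ degrees of freedom), which telescopes to $2^{d/2}\Gamma((d+1)/2)/\sqrt\pi$; dividing by $(\E[R_1])^d = (\E[\chi_d])^d$ yields $\E[|\det U|] = (\Gamma(d/2))^d/(\sqrt\pi \,(\Gamma((d+1)/2))^{d-1})$.

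Putting these together gives $C = \pi^{(d^2-1)/2}/(\Gamma((d+1)/2))^{d-1}$; substituting into $G = (1/d!)\prod a_i^{d-1} C$ yields \eqref{e:G}, and then integrating against $\QQ^{\otimes d}$ produces the factor $(\E[Y^{d-1}])^d$ in \eqref{e:c0}. The main obstacle is the geometric step, combining the Jacobian calculation with the ``$e_d \in \mathrm{Cone}^o$ implies upper'' lemma; the sign-symmetry reduction and the Gaussian evaluation of $\E[|\det U|]$ are essentially standard.
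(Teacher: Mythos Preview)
Your proof is correct and follows essentially the same route as the paper: parametrize by the intersection point $q$ and outward unit normals $u_i=(q-x_i)/a_i$, obtain the Jacobian $|\det[u_1,\ldots,u_d]|\prod_i a_i^{d-1}$, reduce by the $2^d$-fold sign symmetry, and evaluate the resulting integral of $|\det U|$ over $(\mathbb{S}_{d-1})^d$. Your treatment tightens several points the paper leaves loose --- you make the $2$-to-$1$ change of variables and its Jacobian explicit rather than arguing with infinitesimals, you state and prove the branch-selection lemma ($e_d\in\mathrm{Cone}(u)^o$ forces $q$ to be the upper intersection) that the paper's argument uses implicitly, you handle all $d\ge 2$ uniformly rather than doing $d=2$ separately, and you compute $\E[|\det U|]$ directly via Gaussian polar decomposition and Gram--Schmidt telescoping instead of citing \cite[Lemma~9.3]{Janson}.
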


	\begin{proof}
		Clearly it suffices to prove \eqref{e:G}, which
		implies \eqref{e:c0}.

First suppose $d=2$.	Fix $s_1,s_2 >0$. 
	For $0< u < \pi/2, 0< v < \pi/2$
        consider
	an infinitesimal portion of $\partial B(o,s_1)$   at angle between $u$
        and $u+du$ clockwise from the top,
	and an infinitesimal portion of $\partial B(x_2,s_2)$ at angle between $v$ and $v+dv$
	anticlockwise from the top.

	Since the two portions are at an angle $(u+v)$ to each other,
	the integral over $\lambda_2(dx_2)$ of the indicator that
	these portions cross (making a local minimum of
	the set $\R^2 \setminus (B(o,s_1)^o \cup B(x_2,s_2)^o)$)
	is $s_1 s_2 \sin (u+v) dx dy$

        Also for $0 < v < u < \pi/2$ consider
	a portion of $\partial B(o,s_1)$   at angle between $u$
        and $u+du$ clockwise from the  {\em bottom},
	and a portion of $\partial B(x_2,s_2)$ at angle between $v$ and $v+dv$ 
	clockwise from the top.
	The integral over $\lambda_2(dx_2)$ of the indicator that
	the portions cross (again making a local minimum of
	the set $\R^2 \setminus (B(o,s_1)^o \cup B(x_2,s_2)^o)$)
	is
        $s_1 s_2 \sin (u-v)$. Hence
	\begin{align*}
		\int_{\R^2} h^{(1)} (\{(o,s_1),(u,s_2) \} ) \lambda_2(du) 
		& =
        2 s_1 s_2 \int_0^{\pi/2} \int_0^{\pi/2} \sin (u+v) dv du
		\\
		& \quad +
        4 s_1 s_2 \int_0^{\pi/2} \int_0^u \sin (u-v) dv du
		\\
		& = 4  s_1 s_2 \Big(1+ \int_0^{\pi/2} (1- \cos u) du \Big)
         = 2 \pi s_1 s_2.
	\end{align*}
		Thus using the definition \eqref{e:cdef} we obtain that
		$G(s_1,s_2) = \pi s_1s_2$, which gives us the
		case $d=2$ of \eqref{e:G}.

	The above argument uses infinitesimals but can be made rigorous
	by approximating the two circles $\partial D_1$ and $\partial D_2(u)$
	with regular polygons with $m$ sides  and taking the limit
	$m \to \infty$. We omit the details.

	Next we 
	verify  \eqref{e:G} in general dimension $d \geq 3$.
	Fix $s_1,\ldots,s_d \in (0,\infty)$. For $i \in [d]$
	and $x_i \in \R^d$
	consider a portion  of $\partial
	B (x_i,s_i)$ at locations $x_i + s_i e$, $e \in du_i$,
	where $du_i$ is an infinitesimal $(d-1)$-dimensional
	volume element of  the
	$(d-1)$-dimensional unit sphere $\mathbb{S}_{d-1}$.

	The integral over $\lambda_d(dx_2) \cdots \lambda_d(x_d)$
	of the indicator that these  portions have a mutual point
	of intersection with
	each other comes to 
	$|{\rm Det}(u_1,\ldots,u_d) | \prod_{=1}^d (s_i^{d-1} du_i)$,
	and the absolute value of the 
	determinant is unchanged if we replace any $u_i$
	with $-u_i$.

	By Lemma \ref{l:hgg},
	for the point of intersection of these portions to make
	a local minimum of the set $\R^d \setminus \cup_{i=1}^d B(x_i,s_i)^o$
	it necessary and sufficient  that
	$ e_d \in {\rm Cone}(u_1,\dots,u_d)^o$. 
	If we fix $u_1,\ldots,u_d$ in general position
	on $\mathbb{S}_{d-1}$ then we can express $e_d$ uniquely
	as a linear combination
	of $u_1,\ldots,u_d$, and if we are free to reverse  the direction
	of
	$u_1,\ldots,u_d$ then for exactly one out of the  $2^d$ choices
	of the directions for $u_1,\ldots,u_d$ this linear combination
	will have positive coefficients. Thus
	\begin{align*}
		\int_{\RR^{d(d-1)}} h^{(1)}(
		\{(o,s_1),(x_2,s_2),\ldots,(x_d,s_d)\})
	\lambda_d^{d-1}(d(x_2,\ldots,x_d))
		\\
		= 2^{-d} \Big( \prod_{i=1}^d s_i^{d-1} \Big)
		\int_{\mathbb{S}_{d-1} } 
		\cdots
		\int_{\mathbb{S}_{d-1} } 
		|{\rm Det}(u_1,\ldots,u_d) | du_d \cdots du_1.
	\end{align*}
	By \cite[Lemma 9.3]{Janson} the last expression comes to
	\begin{align*}
		2^{-d} \Big( \prod_{i=1}^d s_i^{d-1} \Big)
		 (d \theta_{d})^{d}
		 \pi^{-1/2} \Gamma(d/2)^d \Gamma((d+1)/2)^{1-d}. 
	\end{align*}
	Therefore using the formula for $\theta_d$ and \eqref{e:cdef}
	we obtain that
	\begin{align*}
		G(s_1,\ldots,s_d) = \frac{d^d
		\big(\prod_{i=1}^d s_i^{d-1} \big)
		\pi^{d^2/2}}{d! 2^d 
		\Gamma(1+d/2)^d}
		\times
		\frac{\pi^{-1/2}\Gamma(d/2)^d}{\Gamma((d+1)/2)^{d-1}}
		= \frac{
			\big(\prod_{i=1}^d s_i^{d-1} \big)
		\pi^{(d^2-1)/2}}{d!
		\Gamma((d+1)/2)^{d-1}},
	\end{align*}
	confirming \eqref{e:G}.
	\end{proof}

\section{Proof of Theorem \ref{t:main}}

To complete the proof, given $t >1$ we partition $\R^d$ into rectilinear cubes
$Q_{t,i}, i \in \ZZ^d$, of side $\sqrt{r_t}$. Let $\cI(t)$ be the
collection of $i \in \ZZ^d$ such that $Q_{t,i} \subset A$. 
For $i \in \ZZ^d$, let $Q_{t,i}^-$ be a slightly smaller
rectilinear cube of side $\sqrt{r_t} - 2t^\xxi r_t$ with the same centre
as $Q_{t,i}^-$.
Later we shall prove the following result:
\begin{proposition}
	\label{p:sumbd}
	It is the case that
\begin{align}
	\label{e:Esumbd}
	\sum_{i \in \cI(t)} \EE [ F_t(Q_{t,i}^-) (F_t(Q_{t,i}^-)-1) ] 
	= O \big( \frac{1}{\log t} \big) ~~~~~~~
	{\rm as}~ t \to \infty.
\end{align}
\end{proposition}

\begin{proof}[Proof of Theorem \ref{t:main}]
	\blue{To aid intuition,
		we give a sketch
		before going into details.
		The idea is to cover most of $A$ by boxes
	$Q_{t,i}^-, i \in {\cI(t)}$, and then approximate
	the event $\{R_{t,k} \leq r_t\}$ by the event
	$\cap_{i \in \cI(t)} \{F_t(Q^-_{t,i})=0\}$,
	which is an intersection of independent events each with
	probability close to 1.
	We shall then use Proposition \ref{p:sumbd} to show that
	$\Pr[F_t(Q^-_{t,i}) \geq 1]$ is well approximated 
	by $\E[F_t(Q^-_{t,i})]$.  Thus we
	can approximate $\sum_{i \in \cI(t)} \Pr[F_t(Q^-_{t,i}) \geq 1]$
	by $\E[ F_t(\cup_{i \in \cI(t)} Q_{t,i}^- )] $, which in turn
	can be handled using Proposition \ref{p:meanlim}.

Here are the details.}
	Recall that $A':= A \oplus B(o,\sqrt{r_t})$ and
	$A'':= A''(t):= \{y \in A: B(y,\sqrt{d r_t}) \subset A\}$.
	Then $A'' \subset \cup_{i \in \cI(t)}Q^-_{t,i}$, and
\begin{align}
	\lambda_d ( A' \setminus \cup_{i \in \cI(t)} Q_{t,i}^- ) 
	& \leq \lambda_d(A' \setminus A'') + 
	\sum_{i \in \cI(t)} r_t^{d/2}(1-(1-2t^\xxi r_t/r_t^{1/2})^d)
	\nonumber \\
	& = O(\lambda_d(A' \setminus A'') + t^\xxi r_t^{1/2} )
	\label{e:AminQ}
\end{align}
	Since we took $\xxi < 1/(2d)$ at \eqref{e:xxi}, we have that
	$t^\xxi r_t^{1/2} = O((\log t)^{-1})$.
Therefore by Markov's inequality and Proposition \ref{p:meanlim}, 
	we have that
\begin{align}
	\PP[F_t(A' \setminus \cup_{i \in \cI(t)} Q_{t,i}^-) \geq 1 ]
	\leq
	\E[F_t(A' \setminus \cup_{i \in \cI(t)} Q_{t,i}^-) ]
	 = O \Big(
	 \lambda_d(A' \setminus A'') 
	 + \frac{1}{\log t} 
	 \Big).
	 \label{EN_tbdy}
\end{align}
Next we use the fact that for any number $x \in \NN \cup \{0\}$
we have $x - {\bf 1 }\{x \geq 1 \} \leq x(x-1)/2.$ Applying
	this inequality to the random variables $F_t(Q_{t,i}^-)$, $i \in \cI(t)$,
we obtain that
\begin{align}
	\EE \Big[ \sum_{i \in \cI(t)} ( F_t(Q_{t,i}^-) - {\bf 1}\{F_t(Q_{t,i}^-)
	\geq 1\}  ) \Big] \leq \sum_{i\in \cI(t)} \EE [
		F_t(Q_{t,i}^-) (F_t(Q_{t,i}^-)-1)/2 ]. 
	\label{e:Esum1}
\end{align}

Set $\mu_t := \sum_{i \in \cI(t)}
	\PP[ F_t(Q_{t,i}^-) \geq 1]$.
Using \eqref{e:Esum1}
	 with
	Proposition \ref{p:sumbd}, and then 
	Proposition \ref{p:meanlim} and \eqref{e:AminQ}, gives us that
\begin{align}
	\mu_t
	& = \EE \Big[ \sum_{i \in \cI(t)} F_t(Q_{t,i}^-) \Big]
	+ O \Big( \frac{1}{\log t} \Big) 
	\nonumber \\
	& = c_0 \alpha^{1-d} ((k-1)!)^{-1} \lambda_d(A) e^{-\beta}
	+ O \Big( \lambda_d(A \setminus A'') 
	+ \frac{1}{\log t} \Big).
	\label{e:mu_t}
\end{align}

	Let $p_t := \PP[F_t(Q_{t,i}^-) \geq 1]$, which is the same for
all $i \in \cI(t)$.
	Since the variables $F_t(Q_{t,i}^-), i \in \cI(t)$, are 
independent,
\begin{align}
	\Pr[ F_t(\cup_{i \in \cI(t)} Q_{t,i}^-) =0]
= (1-p_t)^{\#(\cI(t))} \leq 
\exp(-p_t \#(\cI(t))) = e^{-\mu_t}.
	\label{e:expmu}
\end{align}
	By Lemma \ref{l:covUB}, \eqref{e:expmu}
	and \eqref{e:mu_t},
	\begin{align}
		\PP[R_{t,k} \leq r_t] & 
		\leq \PP[F_t(A^o) =0]
		+ O ( t^{-d \xxi/2} ) 
		\nonumber \\
		& \leq \PP[F_t(\cup_{i \in \cI(t)} Q_{t,i}^- ) =0]
		+ O ( t^{-d \xxi/2} ) 
		\nonumber \\
		& \leq  \exp (-
	 c_0 \alpha^{1-d} ((k-1)!)^{-1} \lambda_d(A) e^{-\beta})
		+ O \Big( \lambda_d(A \setminus A'') + \frac{1}{\log t}
		\Big).   
		\label{e:PRUB}
	\end{align}

For an inequality the other way, observe that
$p_t = O(1/\#(\cI(t)) ) = O(r_t^{d/2})$,
and for $t$ large
$1-p_t \geq \exp( - p_t -p_t^2)$
so that
$$
\Pr[ F_t(\cup_{i \in \cI(t)} Q_{t,i}^-) =0]
\geq \exp ((- p_t-p_t^2) \#(\cI(t))) =
(1+ O(r_t^{d/2}))
e^{-\mu_t }.
$$
Then using \eqref{EN_tbdy} and \eqref{e:mu_t} we obtain that
\begin{align}
\Pr[ F_t(A') =0] \geq \exp (-
	 c_0 \alpha^{1-d} ((k-1)!)^{-1} \lambda_d(A) e^{-\beta})
	+ O \Big( \lambda_d(A' \setminus A'')   
	+ \frac{1}{\log t} \Big).
	\label{e:PFLB}
\end{align}
	By rearranging the inequality given in 
	Lemma \ref{l:sc_cov}, using Lemma \ref{l:perc} 
	and \eqref{e:PFLB},
\blue{we obtain
	that}
	\begin{align*}
		\PP[R_{t,k} \leq r_t] 
		& \geq \PP[F_t(A') =0] - \PP[ \scr V_{k,t}^c]
		\\ 
		& \geq  \exp (-
	 c_0 \alpha^{1-d} ((k-1)!)^{-1} \lambda_d(A) e^{-\beta})
		+ O \Big( \lambda_d(A' \setminus A'') + \frac{1}{\log t}
		\Big).   
	\end{align*}
	Together with \eqref{e:PRUB}, this 
	yields \eqref{e:JQ} as required.
\end{proof}

	We still need to prove Proposition \ref{p:sumbd}.
Set $\alpha := \theta_d  \EE [Y^d]$ as before.
Let $\XX := \R^d \times [0,\infty)$. 
Given $t>1$, for $ x \in \R^d$ \blue{we set} 
\begin{align}
	B(x):= \{(y,b) \in \R^d \times \R_+: \|y-x\| \leq r_t b\}.
	\label{e:defBx}
\end{align}
Also, given $\bar\bx = \{\bx_1,\ldots,\bx_d\} \subset \XX$, we write
$q_{\bar\bx} $ for $q^{(r_t)}(\bar\bx)$.
Moreover, from now on we shall use $|\cdot|$ rather than
$\#(\cdot)$ to denote the number of elements of
a finite set in $\XX$.

\begin{lemma}\label{l:simple}
	Let $D $ be a bounded Borel set in $\R^d$, and $t >1$.
	With $F_{t}(D)$ defined at
	\eqref{e:Ftdef},
	we have
\begin{align*}
	F_{t}(D)(F_{t}(D) -1)  =
	\sum_{\bar\bx \subset \xi'_t}
	h_D^{(r_t)}(\bar\bx)
	(\II^{(1)}_{\bar\bx,D}+
	\II^{(2)}_{\bar\bx,D}+
	\II^{(3)}_{\bar\bx,D}) 
\end{align*}
where for
	$\bar\bx = \{\bx_1,\ldots, \bx_d\} \subset \xi'$
	and  $i=1,2,3$, we
	define $S^{(i)}_{\bar\bx,D} := S^{(i)}_{\bar\bx,D}(\xi'_t)$
	by
\begin{align*}
	\II^{(1)}_{\bar\bx,D} (\xi'_t)
	&:= \sum_{\bar\by 
	\subset 
	\xi'_t \setminus \bar\bx} h^{(r_t)}_D(\bar\by) 
	\1\{ |(\xi'_t \setminus \bar\bx )
	\cap (B(\bq_{\bar\bx})) | < k;
	|(\xi'_t \setminus \bar\by )
	\cap (B(\bq_{\bar\by})) | < k\}
	\\
	& ~~~~~~~~~~~
\times	
	\1\{\|q_{\bar\by} - q_{\bar\bx} \|  > 3 t^\xxi r_t  
	\}; 
	\nonumber \\
	\II^{(2)}_{\bar\bx,D} (\xi'_t)
	&:= \sum_{\bar\by
	\subset 
	\xi'_t \setminus \bar\bx} h^{(r_t)}_D(\bar\by) \1\{ 
	|(\xi'_t \setminus \bar\bx ) \cap (B(\bq_{\bar\bx}))| < k;
	|(\xi'_t \setminus \bar\by ) \cap (B(\bq_{\bar\by}))| < k\}
	\\
	& ~~~~~~~~~~~
 \times	\1\{\|q_{\bar\by} - q_{\bar\bx} \|  \leq 3 t^\xxi r_t \}; 
	\nonumber \\
	\II^{(3)}_{\bar\bx,D}(\xi'_t) 
	&:= \sum_{\emptyset\neq \bar\bz \subsetneq \bx} 
	\sum_{ \bar\by 
	\subset \xi'_t\setminus
	\bar\bx } h^{(r_t)}_D(\bar\bz \cup \bar\by)   \\
	& \hspace{2cm} \times \1\{ |(\xi'_t \setminus (\bar\bz \cup \bar\by))
	\cap B(\bq_{\bar\bz \cup \bar\by}) |
		< k ; | (\xi'_t \setminus \bar\bx) \cap B( q_{\bar\bx})
		| < k \}. 
\end{align*}
\end{lemma}

\begin{proof}
	By definition $F_{t}(D)(F_{t}(D)-1)$ is the number
	of  ordered pairs of distinct subsets $\bar\bx,\bar\by$ of $\xi'_t$
	such that $h_D(\bar\bx) = h_D(\bar\by) =1$
	and $|(\xi'_t \setminus \bar\bx) \cap B(q_{\bar\bx})| <k$
	and $|(\xi'_t \setminus \bar\by) \cap B(q_{\bar\by})| <k$
	with $q_{\bar\bx}$ and $q_{\bar\by}$ both lying in $D$.

	We decompose into the case where
	the sets $\bar\bx, \bar\by$ are disjoint
	and the case where they overlap.
	We further
	decompose the first case according to whether
	or not $q_{\bar\bx}$ and $q_{\bar\by}$ 
	lie `far apart'
	(i.e., further than $3 t^\xxi r_t$).
\end{proof}

It remains to estimate the expected value of the three 
sums arising from Lemma \ref{l:simple}.
For use now and later, given
	 $q,q' \in \RR^d$ we define 
	 the vacancy probabilities
	 \begin{align}
		 w_t := \Pr[
			 |\xi'_t \cap B(q)| < k]
		 ;
		 ~~~~~
		 w^{(2)}_t(q,q') := \Pr[
		\xi'_t(B(q)) < k,  
		\xi'_t(B(q)) < k].
	\label{e:wtdef}
	 \end{align}
	 Note that the first probability above does not depend on $q$.
Also let $\QQ'_t$ be the restriction of $\QQ$ to
$[0,t^\xxi]$, i.e.  $\QQ'_t(da) := {\bf 1}_{[0,t^\xxi]}(a)
	\QQ(da)$. 

\begin{lemma}
	\label{p:I1}
	As $t \to \infty$,
	it holds that
	\begin{align}
		\EE \Big[	\sum_{i \in \cI(t)}
		\sum_{\bar\bx 
		\subset \xi'_t}
		h_{Q_{t,i}^-}^{(r_t)} (\bar\bx)
		\II^{(1)}_{\bar\bx,Q_{t,i}^-}(\xi'_t) \Big] =
		O( r_t^{d/2}). 
		\label{e:I1}
	\end{align}
\end{lemma}
\begin{proof}
	By the multivariate Mecke equation and then\eqref{e:r_t}, 
\begin{align*}
	\EE \Big[\sum_{
		\bar\bx \subset
		\xi'_t}
		h_{Q_{t,i}^-}^{(r_t)}(\bar\bx)
	\II^{(1)}_{\bar\bx,Q_{t,i}^-} \Big]  \le 
	t^{2d}
	\int_{\XX^{2d}} &
	w_t^{(2)}(q_{\bar\bx} ,q_{\bar\by})
	h^{(r_t)}_{Q_{t,i}^-}(\bar\bx)
	h^{(r_t)}_{Q_{t,i}^-}(\bar\by)
	{\bf 1} \{ \|q_{\bar\bx} - q_{\bar\by} \| > 3 t^\xxi r_t \}
	\\
		& (\la_d \otimes \QQ'_t)^{2d}(d(\bar\bx,\bar\by)). 
\end{align*}
Whenever $\|q- q'\| > 3 t^\xxi r_t$ we have
	$w_t^{(2)}(q,q') = w_t^2$, and therefore
	\begin{align*}
	\EE \Big[\sum_{
		\bar\bx \subset
		\xi'_t}
		h_{Q_{t,i}^-}^{(r_t)}(\bar\bx)
		\II^{(1)}_{\bar\bx,Q_{t,i}^-} \Big] 
		\le 
		\big( t^{d}
	\int_{\XX^{d}}
	w_t
		h^{(r_t)}_{Q_{t,i}^-}(\bar\bx)
		 (\la_d \otimes \QQ)^{d}(d\bar\bx)
		 \Big)^2
		 = (\EE [ F_{t}(Q_{t,i}^-)])^2.
	\end{align*}
	Set $b_t := \EE [ F_{t}(Q_{t,i}^-)]$, which is the
	same for all $i \in \cI(t)$.
	Then $b_t \#(\cI(t)) \leq \E[F_t(A)] =O(1)$ by
	Proposition \ref{p:meanlim}. Therefore
	\begin{align*}
		\EE \Big[	\sum_{i \in \cI(t)}
		\sum_{\bar\bx 
		\subset \xi'_t}
		h_{Q_{t,i}^-}^{(r_t)} (\bar\bx)
		\II^{(1)}_{\bar\bx,Q_{t,i}^-}(\xi'_t) \Big] 
		\leq b_t^2 \#(\cI(t)) = O( 1/\#(\cI(t))) = O(r_t^{d/2}),
	\end{align*}
		and hence \eqref{e:I1}
\end{proof}
	Next we give upper and lower bounds on
	the measure of $B(y) \setminus B(x)$ for $x,y \in \R^d$.
	\begin{lemma}
		\label{l:lunevol}
		Let $c_1 := (1/2) \EE[ 
		\min( \theta_{d-1}(Y/2)^{d-1}, \theta_d (Y/4)^d)]$
			and $c_2 := (\theta_d/2) \EE[\min (Y,1/2)^d ]$. 
	Then for all large enough $t$ and all
	$x, y \in \R^d$,
	\begin{align}
		(\la_d \otimes \QQ) (  B(y) \setminus B(x)) & \le \theta_{d-1}
		\E[Y^{d-1}]
		\|y-x\| r_t^{d-1};
		\label{e:volUB}
		\\
		(\la_d \otimes \QQ'_t) (  B(y) \setminus B(x)) & \ge 2 c_1 
		\|y-x\| r_t^{d-1}
		~~~~ \mbox{\rm if}~~ \|y - x \| \leq  r_t;
		\label{e:volLB}
		\\
		(\la_d \otimes \QQ'_t) (  B(y) \setminus B(x)) & \ge 2 c_2 
		 r_t^{d}
		~~~~
		~~~~
		~~~~
		~~~~
		\mbox{\rm if}~~ \|y - x \| \geq  r_t.
		\label{e:volLBfar}
	\end{align}
	\end{lemma}
\begin{proof}
	It is straightforward to obtain \eqref{e:volUB} from
	Fubini's theorem. 
	
	For \eqref{e:volLB}, note that by Fubini's theorem
	\begin{align}
		(\lambda_d \otimes \QQ'_t)(B(y) \setminus B(x))
		= \int_{\R_+}
		\lambda_d(B(y,a r_t) \setminus B(x,ar_t)) \QQ'_t(da).
		\label{e:0616a}
	\end{align}
	Let $a \geq 0$.
	If
	$\|y-x\| \leq (a/2)  r_t$ then $\partial B(y, ar_t) \cap
	\partial B(x,ar_t)$ is a $(d-1)$-dimensional sphere of radius
	at least $(a/2)r_t$  and by an application
	of Fubini's theorem $
		\lambda_d(B(y,a r_t) \setminus B(x,ar_t)) 
		\geq \theta_{d-1}(a/2)^{d-1} r_t^{d-1} \|y-x\|$.

		If instead $(a/2)r_t < \|y-x\| \leq r_t$, then
		$B(y,ar_t) \setminus B(x,ar_t)$ contains a ball of
		radius $(a/4)r_t$, so that
		\begin{align*}
			\lambda_d(B(y,ar_t) \setminus B(x,ar_t)) 
			\geq \theta_d (a/4)^d  r_t^{d}
			\geq \theta_d (a/4)^d r_t^{d-1} \|y-x\|,
		\end{align*}
		and then using \eqref{e:0616a} we obtain \eqref{e:volLB}.

			Now suppose $\|y-x \| \geq r_t$.
			Then $\lambda_d(B(y,ar_t) \setminus B(x,ar_t))
			\geq \theta_d \min(a,1/2)^d r_t^d$. Therefore
			using \eqref{e:0616a} we obtain
			\eqref{e:volLBfar}.
\end{proof}

Next we turn to 
$\II_{\bar\bx,D}^{(2)}(\xi'_t)$. We
write $\II^{(2)}_{\bar\bx,D} = 
	\II^{(2a)}_{\bar\bx,D} +  
	\II^{(2b)}_{\bar\bx,D} 
	+ \II^{(2c)}_{\bar\bx,D} 
	+ \II^{(2d)}_{\bar\bx,D} 
	$, where we set 
	\begin{align*}
		\II^{(2a)}_{\bar\bx,D} := 
		\II^{(2a)}_{\bar\bx,D}(\xi'_t) 
&:= \sum_{\bar\by 
		\subset \xi'_t} h^{(r_t)}_D(\bar\by) \1\{
	|(\xi'_t \setminus \bar\bx) \cap  B(\bq_{\bar\bx})|  < k ,
		|(\xi'_t \setminus \bar\by) \cap 
		B(\bq_{\bar\by}) |   < k 
	\} 
		\\
& \quad \quad \quad \quad 
	\times	
		{\bf 1} \{
			\bar\bx \cap B(\bq_{\bar\by}) =   \bar\by\cap 
			B(\bq_{\bar\bx}) = \emptyset \} 
		\1\{
		\|\bq_{\bar\by}-\bq_{\bar\bx}\|\le  r_t 
	\}; \\
\II^{(2b)}_{\bar\bx,D} 
		:= \II^{(2b)}_{\bar\bx,D} (\xi'_t) 
&:= \sum_{\bar\by 
		\subset \xi'_t} h^{(r_t)}_D(\bar\by) \1\{
	|(\xi'_t \setminus \bar\bx) \cap  B(\bq_{\bar\bx})|  < k ,
		|(\xi'_t \setminus \bar\by) \cap B(\bq_{\bar\by}) |   < k \} 
		\\
& \quad \quad \quad \quad 
	\times	
		{\bf 1} \{ \bar\by \cap B(\bq_{\bar\bx}) \neq \emptyset \} 
		\1\{ \|\bq_{\bar\by}-\bq_{\bar\bx}\|\le  r_t 
	\}; \\
\II^{(2c)}_{\bar\bx,D} 
		:= \II^{(2c)}_{\bar\bx,D} (\xi'_t) 
&:= \sum_{\bar\by 
		\subset \xi'_t} h^{(r_t)}_D(\bar\by) \1\{
	|(\xi'_t \setminus \bar\bx) \cap  B(\bq_{\bar\bx})|  < k ,
		|(\xi'_t \setminus \bar\by) \cap B(\bq_{\bar\by}) |   < k \} 
		\\
& \quad \quad \quad \quad 
	\times	
		{\bf 1} \{ \bar\by \cap B(\bq_{\bar\bx}) = \emptyset \} 
		{\bf 1} \{ \bar\bx \cap B(\bq_{\bar\by}) \neq \emptyset \} 
		\1\{ \|\bq_{\bar\by}-\bq_{\bar\bx}\|\le  r_t 
		\}; \\
\II^{(2d)}_{\bar\bx,D} := 
		\II^{(2d)}_{\bar\bx,D}(\xi'_t) 
&:= \sum_{\bar\by 
		\subset \xi'_t}
		h^{(r_t)}_D(\bar\by) \1\{
	|(\xi'_t \setminus \bar\by) \cap B(\bq_{\bar\by}) |  < k ,
		|(\xi'_t \setminus \bar\bx) \cap B(\bq_{\bar\bx}) |
		< k 
	\} 
		\\
& \quad \quad \quad \quad 
\times
		\1\{  r_t < 
		\|\bq_{\bar\bx}-\bq_{\bar\by}\| \leq 3 t^\xxi   r_t 
	\}. 
	\end{align*}

The expected sum of
$\II_{\bar\bx,Q_{t,i}^-}^{(2a)}$
is perhaps the most  delicate one to deal with,
\blue{and before going into the details in the
proof of Lemma \ref{p:case2} below, we first provide some intuition
(it is easiest at first to keep in mind the case where
$\QQ$ is the Dirac measure $\delta_1$).
When computing the probability that a pair
of $d$-tuples $\bar\bx$ and $\bar\by$ both contribute to 
the sum, we get an extra exponential factor with $-t$ times
the $(\lambda_d \otimes \QQ)$-measure
of $B(\bq_{\bar\by})\setminus B(\bq_{\bar\bx})$
in the exponent, due to vacancy in this set.
 This measure is roughly proportional to $r_t^{d-1}
\|q_{\bar\by}-
q_{\bar\bx}\|$}.
\blue{We shall make a change of variables so as
to integrate over $q_{\bar\bx}$ and over 
$z:= q_{\bar\by}-q_{\bar\bx}$, 
along with the polar coordinates
of the points of $\bar\bx$ relative to $q_{\bar\bx}$
and the points of $\bar\by$ relative to $q_{\bar\by}$.

When $\|z\|$ is small the 
 $(\lambda_d \otimes \QQ)$-measure
of $B(\bq_{\bar\by})\setminus B(\bq_{\bar\bx})$
 can be so small  that our extra exponential factor
 does not provide much decay. While the volume of the region of integration of 
$z$ where $\|z\|$ is small is itself small, it turns
out that the above argument is not quite enough on its own
to give the required decay for the expected sum of
 $S_{\bx,Q_{t,i}^-}^{(2a)}$,
and we shall require a further geometric argument that we now describe.

Given $\bq_{\bar\bx}$ and $\bq_{\bar\by}$,
let $L$ denote the hyperplane equidistant from these two points.
The condition $\bar\bx \cap B(q_{\bar\by}) =
\bar\by \cap B(q_{\bar\bx}) = \emptyset$ implies that
 all points of $\bar\bx$ lie on one side of $L$ and 
 all points of $\bar\by$ lie on the other side of $L$.
Suppose $q_{\bar\by}$ is lower than $q_{\bar\bx}$.
By the hyperplane condition in Lemma \ref{l:hgg},
at least one of the points of $\bar\bx$ must lie
below the hyperplane  $L'$ that is parallel to $L$ 
and contains $q_{\bar\bx}$, and hence 
between $L$ and $L'$. Thus when $\|z\|$ is small,
the spherical location of this point relative to $q_{\bar\bx}$
is confined to a small region.
This extra constraint on the integrals provides an
extra factor of $(\log t)^{-1}$ in our estimate
for the  expected sum of 
 $S_{\bx,Q_{t,i}^-}^{(2a)}$.
}

\begin{lemma}\label{p:case2}
	 As $t \to \infty$, it holds that  
$$
	\EE \Big[ \sum_{i \in \cI(t)}
		\sum_{\bar\bx \subset \xi'_t}
		h_{Q_{t,i}^-}^{(r_t)} (\bar\bx)
		\II^{(2a)}_{\bar\bx,Q_{t,i}^-}(\xi'_t) \Big]
	= O((\log t)^{-1}).
	$$
\end{lemma}

\begin{proof}
	Let $D \subset \R^d$ be a Borel set.
	Applying the Mecke equation leads to 
\begin{align}
	& \EE \Big[ \sum_{\bar\bx
\subset	\xi'_t}
	h_{D}^{(r_t)} (\bar\bx)
	\II^{(2a)}_{\bar\bx,D}(\xi'_t) \Big] 
	\nonumber \\
&\le 
	\frac{t^{2d}}{d!^2}
	 \int_{\XX^{2d}}  h^{(r_t)}_D(\bar\bx) 
	h^{(r_t)}_D(\bar\by) 
	{\bf 1}\{\|\bq_{\bar\bx}-\bq_{\bar\by}\|\le  r_t
	;
	\bar\bx \cap B(q_{\bar\by}) = \bar\by \cap B(q_{\bar\bx})= \emptyset
	\}
	\nonumber \\
	&\quad \quad \quad 
	\times \Pr [ | \xi'_t  \cap  B(\bq_{\bar\by})| < k,
	|\xi'_t \cap  B(\bq_{\bar\bx})| < k ]
	(\la_d \otimes \QQ'_t)^{2d}(d(\bar\bx ,\bar\by)).
	\label{e:0619a}
\end{align}
Here, in a mild abuse of notation, $\bar\bx$ represents
	either the vector $(\bx_1,\ldots,\bx_d) \in \XX^d$ or
	 the corresponding point set $\{\bx_1,\ldots,\bx_d\} \subset \XX^d$,
	according to context,
	and likewise for $\bar\by$.

	The preceding integral is restricted to $(\bar\bx,\bar\by)$
	with $\bar\bx \cap B(q_{\bar\by}) = \bar\by \cap B(q_{\bar\bx}) =
	\emptyset$. For such $(\bar\bx,\bar\by)$,
	we can write
	\begin{align*}
		& \Pr [ |(\bar\bx \cup \xi'_t)  \cap  B(\bq_{\bar\by})| < k,
	|(\bar\by \cup \xi'_t) \cap  B(\bq_{\bar\bx})| < k ]
		\\
		& = \Pr [ | \xi'_t  \cap  B(\bq_{\bar\by})| < k,
	|\xi'_t \cap  B(\bq_{\bar\bx})| < k ]
		\\
		& \leq \Pr [ | \xi'_t  \cap  B(\bq_{\bar\bx})| < k]
		\Pr[ |\xi'_t \cap  B(\bq_{\bar\by})
			\setminus B(\bq_{\bar\bx})| < k ].
	\end{align*}
	Using
	\eqref{e:volUB} and \eqref{e:volLB} we obtain that
	for $q, q' \in \R^d$,
	\begin{align}
		\Pr[ |\xi'_t \cap  
		B(q')
			\setminus B(q)| < k ]
		& \leq \sum_{j=0}^{k-1}
		\frac{(t \theta_{d-1} r_t^{d-1} \EE[Y^{d-1} ]
		\|q'- q\|
		)^j}{j!} \exp(- 2 t c_1 r_t^{d-1} 
			\|q' -q\| 
			)
			\nonumber
			\\
			& \leq c_3 \exp(- c_1 t r_t^{d-1} 
			\|q' - q\|),
		~~~~~{\rm if}~ 
		\|q' - q\|
		\leq r_t,
			\label{e:0612a}
	\end{align}
	for some constant $c_3$ independent of $t$, $q$ and $q'$.
	Also, by \eqref{e:r_t} and 
	the calculation at \eqref{e:fromHolder},
	there is a   constant $c_4$  such that
	for all large enough $t$
	and  any $q \in \RR^d$ we have that
	\begin{align}
		\Pr[| \xi'_t (B(q)) | < k ] & \leq k   (t\theta_d \E[Y^d] 
		r_t^d)^{k-1}
		\exp(-tr_t^d \EE [ Y^d {\bf 1} \{ Y \leq t^\xxi\}]) 
		\nonumber \\
		& \leq  c_4  (\log t)^{k-1}   \Big( \frac{1}{t (\log t)^{d+k-2}
		}
		\Big) 
		=  \frac{c_4}{t (\log t)^{d-1}}.
		\label{e:fromvolLB}
	\end{align}

	For $\bar\bx = (\bx_1,\ldots,\bx_d) 
	\in \XX^d$
	and  $\bar\by = (\by_1,\ldots,\by_d) \in \XX^d$,
	and $i \in [d]$,
	 write $\bx_i = (x_i,a_i)$ and $\by_i = (y_i,b_i)$ with
	$x_i, y_i \in \R^d$ and
	$a_i, b_i \in \R_+$.
	Write $x_i= \bq_{\bar\bx} + r_t a_i u_i$ and $y_i=\bq_{\bar\by} + r_t
	b_i u_{d+i}$, $i\in[d]$, where
$$
u_i:= \Big(\cos \phi_{i1}, \cos \phi_{i2} \sin\phi_{i1}, \ldots , \cos \phi_{i(d-1)} \prod_{j=1}^{d-2}\sin \phi_{ij}, \prod_{j=1}^{d-1}\sin \phi_{ij}\Big)
$$
with $\phi_{ij}\in[0,\pi]$ for $j\in[d-2]$ and $\phi_{i(d-1)}\in [0,2\pi)$, $i\in[2d]$.
	For fixed values of the marks $a_1,\ldots,a_d$ (for $x_1, \ldots,x_d$
	respectively) and $b_1,\ldots,b_d$ (for $y_1,\ldots,y_d$
	respectively),
let $J, J'$ denote the Jacobians in the changes of variables 
\begin{align*}
	\la_d^d(d(x_1,\ldots,x_d)) &= J(\bq_{\bar\bx}, \phi_{[d],.})\la_d(d\bq_{\bar\bx}) \prod_{i\in [d], j\in[d-1]} d\phi_{ij}; \\
	\la^d_d(d(y_1,\ldots,y_d))
	&= J'(\bq_{\bar\by}, \phi_{[2d]\setminus [d],.})\la_d(d\bq_{\bar\by}) \prod_{i\in [2d]\setminus [d], j\in[d-1]} d\phi_{ij}.
\end{align*}
Here, for $I \subset [2d]$,
we use $\phi_{I,.}$ to denote the set of variables $\phi_{ij}, i\in I, 
j\in [d-1]$.
We see that
$J$ is bounded by $c_5 r_t^{d(d-1)} \prod_{i=1}^d a_i^{d-1}$
and $J'$ is bounded by $c_5 r_t^{d(d-1)} \prod_{i=1}^d b_i^{d-1}$
for some universal constant $c_5<\infty$. 
Moreover, when we integrate out each factor of $b_i^{d-1}$ with
respect to the measure $\QQ$ we get a value of  
$\EE[Y^{d-1}]$, which is finite since we assume $\EE[Y^{2(d-1)}]< \infty$.

Let $u$ be the unit directional vector of $\bq_{\bar\bx}-\bq_{\bar\by}$.  We 
claim that if $h^{(r_t)}(\bar\bx) = h^{(r_t)}(\bar\by)=1$,
and 
$\bx_i \notin B(q_{\bar\by})$ and $\by_i \notin B(q_{\bar\bx})$ 
for all $i \in [d]$,
then there exists $i\in[2d]$ such that  
\begin{align}\label{e:u_i}
	|\langle r_t a_i u_i, u \rangle| \le 
	\|\bq_{\bar\bx}-\bq_{\bar\by}\|/2 ,
\end{align}
where for $i \in[d] $ we set $a_{i+d}:= b_i$.
To prove this, suppose that $\bq_{\bar\by}$ is lower than $\bq_{\bar\bx}$. Let $L$ denote the hyperplane such that $\bq_{\bar\by}$ and $\bq_{\bar\bx}$ are mirror to each other  with respect to $L$. 
	For $i \in [d]$, since $x \notin B(q_{\bar\by})$
	we have $\|x_i - \bq_{\bar\bx}\| = a_i r_t <
	\|x_i - \bq_{\bar\by}\|$, so that $x_1,\ldots,x_d$
	all lie 
	above $L$. Let $L^+$ (resp. $L^-$) denote the hyperplane parallel to $L$ that contains $\bq_{\bar\bx}$ (resp. $\bq_{\bar\by}$). 
	From $h^{(r_t)}(\{\bx_1,\ldots,\bx_d\})=1$, using
	Lemma \ref{l:hgg} we see
	that at least one of $x_1,..., x_d$, say $x_1$, lies below
	$L^+$ (but above $L$). As a consequence, the projection of
	$x_1-\bq_{\bar\bx}$ to $u$ is bounded by $\|\bq_{\bar\bx} - \bq_{\bar\by}\|/2$,
	as required. The case where $\bq_{\bar\bx}$ is lower than $\bq_{\bar\by}$ is
	completely analogous, with $x_i$ replaced by $y_i$, so that
	\eqref{e:u_i} holds for some $u_i, i\in [2d]\setminus [d]$. 

	By \eqref{e:0619a}, \eqref{e:0612a} \eqref{e:fromvolLB}
	and the preceding discussion,
for any bounded Borel $D \subset \RR^d$,
\begin{align*}
	\EE \Big[\sum_{\bar\bx \subset \xi'_t}
	h^{r_t}_{D}(\bar\bx)
	\II^{(2a)}_{\bar\bx,D} (\xi'_t)
	\Big]
	\le & \frac{ c_5^2 c_4 (\EE[Y^{d-1}])^{2d-1} 
	t^{2d}  r_t^{2d(d-1)}
	}{d! t (\log t)^{d-1}}
	\sum_{i=1}^{2d}
	 \\ &
	 \times \int \cdots \int a^{d-1}
	 e^{-c_1 t \|\bq_{\bar\by}-\bq_{\bar\bx}\| r_t^{d-1}}
	\1\{ \|\bq_{\bar\by}-\bq_{\bar\bx}\|\le r_t, \bq_{\bar\bx}\in D
	\} \\ & 
	 \times \1\{ |\langle r_t a u_i, u\rangle| \le \|\bq_{\bar\by}
	-\bq_{\bar\bx}\|  \} 
	\QQ(da) \la^{2}_d(d(\bq_{\bar\bx}, \bq_{\bar\by})) \prod_{i\in[2d],j\in[d-1]} d\phi_{ij}.
\end{align*}

Let $v$ denote the surface measure on the unit sphere $\mathbb{S}_{d-1}$. 
	There is a finite constant  $c_6$ such that
	$v(\{u_1\in \mathbb{S}_{d-1}: |\langle a r_t
	u_1 ,u\rangle| \le \|\bq_{\bar\by}-\bq_{\bar\bx}\|\})\le c_6
	\|\bq_{\bar\by}-\bq_{\bar\bx}\|/ (a r_t)$ for all $t$, all $a>0$
	and all $\bq_{\bar\bx}, \bq_{\bar\by}$
	with $\|\bq_{\bar\by}-\bq_{\bar\bx}\| \leq r_t$. Hence, for some finite $c_7$,
\begin{align*}
	\EE \Big[ \sum_{\bar\bx \subset
	\xi'_t}
	h_{D}^{(r_t)}(\bar\bx)
	\II^{(2a)}_{\bar\bx,D} \Big]
	&
	\le  c_7  t^{2d-1} (\log t)^{1-d} r_t^{2d(d-1)}
	\int
	e^{-c_1 t \|\bq_{\bar\by}-\bq_{\bar\bx}\|r_t^{d-1}  }
	(\|\bq_{\bar\by}-\bq_{\bar\bx}\|/r_t) \\
	&
	\hspace{2cm}
	\times \1\{\|\bq_{\bar\by}-\bq_{\bar\bx}\|\le r_t,
	\bq_{\bar\bx}\in D\}  \la^{2}_d(d(\bq_{\bar\bx} ,\bq_{\bar\by})).
\end{align*}
By changing  variables from
$\bq_{\bar\by}$ to
 $z = \bq_{\bar\by} - \bq_{\bar\bx}$ and then to
 $w = t r_t^{d-1} z$,
we see that the second factor inside the last integral becomes
$\|w\|/(tr_t^d)$ and  the
last integral as a whole is $\lambda_d(D) \times
O((tr_t^d)^{-1} (tr_t^{d-1})^{-d}),$ uniformly in $d$.
Hence
\begin{align}
	& \EE \Big[ \sum_{i \in \cI(t)} 
	\sum_{\bar\bx \subset \xi'_t}
	h^{r_t}_{Q^-_i} (\bx) \II^{(2a)}_{\bar\bx,Q_{t,i}^-} \Big]
	= O(t^{d-2} (\log t)^{1-d}  r_t^{d(d-2)} )
	= O((\log t)^{-1}),
	\label{e:I2aest}
\end{align}
as required.
\end{proof}

Next we consider 
$\II_{\bar\bx,Q_{t,i}^-}^{(2b)}$
and $\II_{\bar\bx,Q_{t,i}^-}^{(2c)}$. Note that these sums are zero in the
case where $k=1$, so the reader interested only in \blue{that} case can 
skip the next
lemma. 
\blue{
For general $k$, in
$\II_{\bar\bx,D}^{(2b)}$
the condition $\bar\by \cap B(q_{\bar\bx}) \neq \emptyset$ 
means that the number of variables of integration when
we apply the Mecke formula is reduced by at least 1,
which translates into a reduction of the integral 
by a factor of $O(tr_t^d)$ and hence by a factor of $O((\log t)^{-1})$.
}
\begin{lemma}\label{p:case2bc}
	 As $t \to \infty$, it holds that  
$$
	\EE \Big[ \sum_{i \in \cI(t)}
		\sum_{\bar\bx \subset \xi'_t}
		h_{Q_{t,i}^-}^{(r_t)} (\bar\bx)
		(\II^{(2b)}_{\bar\bx,Q_{t,i}^-}(\xi'_t) +
		\II^{(2c)}_{\bar\bx,Q_{t,i}^-}(\xi'_t))
		\Big]
	= O((\log t)^{-1}).
	$$
\end{lemma}
\begin{proof}
By the Mecke formula, for any bounded Borel $D \subset \R^d$,
\begin{align*}
	& \EE \Big[ \sum_{\bar\bx 
	\subset \xi'_t}
	h_{D}^{(r_t)} (\bar\bx)
	\II^{(2b)}_{\bar\bx,D}(\xi'_t) \Big] 
	\\
&\le 
	\frac{t^{2d}}{d!^2 } 
	 \int_{\XX^{2d}}  h^{(r_t)}_D(\bar\bx) 
	h^{(r_t)}_D(\bar\by) 
	{\bf 1}\{\|\bq_{\bar\by}-\bq_{\bar\bx}\|\le 2 r_t
	; \bar\by \cap B(q_{\bar\bx}) \neq \emptyset
	\}
	\\
	&\quad \quad \quad 
	\Pr [ |(\bar\bx \cup \xi'_t)  \cap  B(\bq_{\bar\by})| < k,
	|(\bar\by \cup \xi'_t) \cap  B(\bq_{\bar\bx})| < k ]
	(\la_d \otimes \QQ'_t)^{2d}(d(\bar\bx ,\bar\by)).
\end{align*}
Assume without loss of generality that $k \geq 2$.
When
	$\|\bq_{\bar\by}-\bq_{\bar\bx}\|\le  r_t $ and
	$\bar\by \cap B(q_{\bar\bx}) \neq \emptyset $, 
		using \eqref{e:0612a} and \eqref{e:fromvolLB} 
	we have
		that
	\begin{align*}
		& \Pr [ |(\bar\bx \cup \xi'_t)  \cap  B(\bq_{\bar\by})| < k,
	|(\bar\by \cup \xi'_t) \cap  B(\bq_{\bar\bx})| < k ]
		\\
		& \leq \Pr[ |\xi'_t \cap B( q_{\bar\bx}) |
		< k-1]
		\times
		 \Pr[ |\xi'_t \cap B(q_{\bar\by})
		 \setminus B(q_{\bar\bx})| < k]
		 \\
		 & \leq \frac{c_6 c_4  (tr_t^d)^{k-2}}{t (\log t)^{d+k-2}}
		 \exp(-c_1 t r_t^{d-1}
		 \|\bq_{\bar\by}
		 - \bq_{\bar\bx}\| ) .
	\end{align*}
	Therefore by going into polar coordinates as before
	but this time without any geometrical constraint, we
	obtain for some constant $c$ that
	\begin{align*}
	& \EE \Big[ \sum_{\bar\bx
		\subset \xi'_t}
		h_{D}^{(r_t)} (\bar\bx)
		\II^{(2b)}_{\bar\bx,D}(\xi'_t) \Big] 
		\leq \frac{ ct^{2d-1}
		(tr_t^d)^{k-2} r_t^{2d(d-1)} }{
			(\log t)^{d+k-2}}
	 \\ &
	 \times \int \cdots \int e^{-c_1 t
		r_t^{d-1}
		\|\bq_{\bar\by}-\bq_{\bar\bx}\| 
		}
		\1\{ 
		\bq_{\bar\bx}\in
		D
	\}
\la^{2}_d(d(\bq_{\bar\bx}, \bq_{\bar\by})) \prod_{i\in[2d],j\in[d-1]} d\phi_{ij}.
	\end{align*}
	Changing variables from $q_{\bar\by}$ to
	$z= q_{\bar\by}-q_{\bar\bx}$
	and then to $w= tr_t^{d-1}z$, we obtain that
	\begin{align*}
		& \EE \Big[ \sum_{i \in \cI(t)}
		\sum_{\bar\bx 
		\subset \xi'_t}
		h_{Q_{t,i}^-}^{(r_t)} (\bar\bx)
		\II^{(2b)}_{\bar\bx,Q_{t,i}^-} \Big] 
		\leq \frac{c t^{2d+k-3} r_t^{d(2d+k-4)}}{(\log t)^{d+k-2}}
		\sum_{i \in \cI(t)}
		 \lambda_d(Q_{t,i}^-)
		\int e^{-c_1 \|w\|} (t r_t^{d-1})^{-d} dw,
	\end{align*}
	which is $O((\log t)^{-1})$. The sum involving
	 $\II^{(2c)}(\bar\bx)$
	is treated in the same way with the roles of $\bar\bx$ and $\bar\by$
	reversed.
	\end{proof}

Next we deal with  $\II_{\bar\bx,Q_{t,i}^-}^{(2d)}$. 
\blue{In this case $q_{\bar\bx}$ and $q_{\bar\by}$ are well-separated
so the `extra exponential factor' described in the discussion
prior to Lemma \ref{p:case2}
 provides an effective bound.} 
\begin{lemma}\label{p:case2d}
	There exits $\delta >0$ such that
	 as $t \to \infty$, it holds that  
	\begin{align}
	\EE \Big[ \sum_{i \in \cI(t)}
		\sum_{\bar\bx \subset \xi'_t}
		h_{Q_{t,i}^-}^{(r_t)} (\bar\bx)
		\II^{(2d)}_{\bar\bx,Q_{t,i}^-}(\xi'_t) 
		\Big]
	= O(t^{-\delta}).
	\label{e:0619b}
\end{align}
\end{lemma}
\begin{proof}
	Set $c_2 := (\theta_d/2) \EE[\min (Y,1/2)^d ]$ as before. 
	By \eqref{e:volLBfar}, 
	for all large enough $t$ and all $x,y \in \R^d$
we have  that
\begin{align}
	\PP[ | \xi'_t \cap B(y) \setminus B(x) | <k]
	\leq e^{-c_2 t r_t^d} ~~
	\mbox{whenever} ~~ \|y - x\| \geq  r_t. 
	\label{e:LBfar}
\end{align}
	Hence, with $w_t$ defined at \eqref{e:wtdef},
	  if $\|q_{\bar\by}- q_{\bar\bx}\| > r_t$, we have
\begin{align*}
	\Pr [ |(\bar\bx \cup \xi'_t)  \cap  B(\bq_{\bar\by})| < k,
	|(\bar\by \cup \xi'_t) \cap  B(\bq_{\bar\bx})| < k ]
	\leq w_t e^{-c_2 tr_t^d}.
\end{align*}
We apply the Mecke equation and
	\eqref{e:r_t} to deduce that for any $i \in \cI(t)$,
\begin{align*}
	& \EE
	\Big[ 
	\sum_{\bar\bx \subset \xi'_t}
	h^{(r_t)}_{Q_{t,i}^-}(\bar\bx)
	\II^{(2d)}_{\bar\bx,Q_{t,i}^-} (\xi'_t) \Big] 
	\\
	&\le 
	\frac{t^{2d}}{d!^2}
	\int  h^{(r_t)}_{Q_{t,i}^-}(\bar\bx) h^{(r_t)}_{Q_{t,i}^-}
	(\bar\by) w_t e^{- c_2 t r_t^{d} }
	{\bf 1}\{\|q_{\bar\by} - q_{\bar\bx}\| \in (r_t,3t^\xxi r_t]\}
	(\la_d \otimes \QQ'_t)^{2d}(d(\bar\bx,\bar\by)).
\end{align*}
Going into polar coordinates as before for $\bar\by$ but not $\bar\bx$
gives us
a factor of $r_t^{d(d-1)}$. Writing $z= \bq_{\bar\by}-\bq_{\bar\bx}$,
we obtain that the last expression is
bounded by a constant times
\begin{align*}
	t^{2d} w_t e^{-c_2tr_t^d} \Big( \int h^{(r_t)}_{Q^-_{t,i}}(\bar\bx)
	(\lambda_d \otimes \QQ'_t)^d(d\bar\bx) \Big)
	\times r_t^{d(d-1)} (t^\xxi r_t)^d.
	\\
	= 
	\Big(
	t^d w_t\int h^{(r_t)}_{Q_{t,i}^-}(\bar\bx)
	(\lambda_d \otimes \QQ'_t)^d(d\bar\bx) \Big)
	\times
	t^{d \xxi}  e^{-c_2tr_t^d} 
	(t r_t^{d})^d.
\end{align*}
The first factor (in large  brackets), summed over
	$i \in \cI(t)$, equals
	$\EE [F_t(\cup_{i \in \cI(t)} Q_{t,i}^-)]$, and
	is therefore  $O(1)$,
	while by \eqref{e:r_t} we
	have $e^{-c_2 tr_t^d} \leq t^{-  c_2/(2 \alpha)}$ for
	$t$ large,
	and since $\xxi d < c_2/(4 \alpha)$ by \eqref{e:xxi}
 the second factor is 
 $O(t^{-c_2/(4 \alpha)} (\log t)^d)$.
Hence for some $\delta >0$, we have \eqref{e:0619b}.
\end{proof}

\blue{
The sum
 $\II_{\bar\bx,Q_{t,i}^-}^{(3)}$
 is handled in Lemma \ref{p:case3} below. 
The intuition is similar to that described for
 $\II_{\bar\bx,Q_{t,i}^-}^{(2a)}$
just before Lemma \ref{p:case2}, except that the final geometrical
argument described there no longer applies. What we have here instead,
since we are now considering pairs of non-disjoint $d$-tuples,
is that when we apply the Mecke formula here we are integrating
over
 $(d-j)$-tuples
 $\bar\by$ for some $j \in [1,k-1]$, rather than $d$-tuples as
the Lemma \ref{p:case2}. Having $j$ fewer variables of integration
causes us to lose a factor of $(tr_t^d)^j$, 
which decreases the expectation by a factor of $(\log t)^j$.
  }

\begin{lemma}
	\label{p:case3}
As $t \to \infty$ it holds that 
	\begin{align}
		\EE \Big[ \sum_{i \in \cI(t)}
		\sum_{\bar\bx \subset \xi'_t}
		\II^{(3)}_{\bar\bx,Q_{t,i}^-}(\xi'_t)
		\Big]
		= O((\log t)^{-1}).
		\end{align}
\end{lemma}

\begin{proof}
	Given bounded Borel $D \subset \R^d$, and $x \in \xi'_t$,
	for each $j \in [d-1]$,
	let $\II_{\bar\bx,D}^{(3,j)}$ be
	the contribution of sets $\bar\bz \subset \bar\bx$ of
	cardinality $j$ to $\II_{\bar\bx,D}^{(3)}(\xi'_t)$.
	Also define
	\begin{align*}
		T^{(j)}_{\bar\bx,D} &:= 
		\sum_{\bar\bz \subset \bar\bx:|\bar\bz|=j}
		\sum_{ \bar\by 
		\subset\xi'_t\setminus
		\bar\bx } h^{(r_t)}_D(\bar\bz \cup \bar\by)   
		{\bf 1}\{\|\bq_{\bar\bz \cup \bar\by} -
		\bq_{\bar\bx}\|\leq r_t\}
		\\
		& \hspace{2cm} \times
		\1\{ |
		(\xi'_t \setminus (\bar\bz \cup \bar\by)) \cap 
		B(\bq_{\bar\bz \cup \bar\by}) | < k \} 
		\1\{ | (\xi'_t \setminus \bx) \cap
		B(\bq_{\overline{\bx}}) | < k  \};
\\
		U^{(j)}_{\bar\bx,D} &:= 
		\sum_{\bar\bz \subset \bar\bx:|\bar\bz| =j}
		\sum_{ \bar\by 
		\subset 
		\xi'_t\setminus \bar\bx} 
		h^{(r_t)}_D(\bar\bz \cup  \bar\by)   
		{\bf 1}\{\|\bq_{\bar\bz \cup \bar\by} - \bq_{\bar\bx}\|>  r_t\}
		\\
		& \hspace{2cm} \times \1\{ |
		(\xi'_t \setminus (\bar\bz \cup \bar\by)) \cap 
		B(\bq_{\bar\bz \cup \bar\by}) | <
		k \} 
		\1\{ | (\xi'_t \setminus \bar\bx) \cap
		B(\bq_{\bar\bx}) | < k  \}.
\end{align*}
	Then $\II^{(3)}_{\bar\bx,D}(\xi'_t) =
	\sum_{j=1}^{d-1} \II_{{\bar\bx,D}}^{(3,j)} 
	= \sum_{j=1}^{d-1}( T_{{\bar\bx,D}}^{(j)} + U_{{\bar\bx,D}}^{(j)})$.

	By the Mecke equation,
	 \eqref{e:fromvolLB},
	 \eqref{e:r_t} and  \eqref{e:0612a}, there exists $c_8 >0$ such that
\begin{align}
&
	 \EE \Big[ \sum_{\bar\bx \subset \xi'_t}
	 T^{(j)}_{\bar\bx,D} h_D^{(r_t)} (\bar\bx) \Big]
	 = \frac{t^{2d-j}}{j!(d-j)!^2}
	 \int_{\XX^{d}} 
	 \int_{\XX^{d-j}}
	 h_D^{(r_t)} (\bar\bx) 
	 h_D^{(r_t)} (\bar\bx_{[j]} \cup \bar\by) 
	 \nonumber
	 \\
	 & \times 
	 \Pr[ |(\xi'_t \cup \bar\by) \cap B(q_{\bar\bx})| <k,
	  |(\xi'_t \cup  \bar\bx_{[j]} \cup \bar\by) \cap 
	  B(q_{\bar\bx_{[j]} \cup \bar\by})| <k ] 
	(\lambda_d \otimes \QQ'_t)^{d-j}(d\bar\by)
	(\lambda_d \otimes \QQ'_t)^{d}(d\bar\bx)
	\nonumber \\
	&	\le \frac{c_8  t^{2d-j}
	}{t(\log t)^{d-1} 
	}
	\int_{\XX^d} \int_{\XX^{d-j}}
	h^{(r_t)}_D(\bar\bx) h^{(r_t)}_D(\bar\bx_{[j]} \cup \bar\by)
	\times \exp(- c_1 t \|\bq_{\bar\bx} - \bq_{\bar\bx_{[j]} 
	\cup \bar\by}\| r_t^{d-1} )
	\nonumber
	\\
	&
	\quad \quad \quad
	\quad \quad \quad
	\quad \quad \quad
	\times
	(\la_d \otimes \QQ'_t)^{d-j} (d\bar\by)
	(\la_d \otimes \QQ'_t)^{d} (d\bar\bx),
	\label{e:IIa}
\end{align}
	where here $\bar\by := (\by_1,\ldots,\by_{d-j})$,
	identified with $\{\by_1,\ldots,\by_{d-j}\}$ according to context,
	and $\bar\bx_{[j]} :=
	(\bx_1,\ldots,\bx_j)$, identified with
	$\{\bx_1,\ldots,\bx_{j}\}$ according to context.
	Below we shall  write 
	$q_{\overline{\bx_{[j]} \by}}$
	for 
	$q_{\bar{\bx}_{[j]} \cup \bar\by}$.

	For each $i \in [d]$ write $\bx_i = (x_i,a_i)$,
	and if $i \leq d-j$ write $\by_i = (y_i,b_i)$,
	with $x_i, y_i \in \R^d$ and $a_i,b_i \in \R_+$.
	Given the values of $\bq_{\bar\bx}$ and $\bq_{\overline{\bx_{[j]}\by}}$,
	by definition for each $i \in [j]$ we have $\|x_i - q_{\bar\bx}\|
	= \|x_i - q_{\overline{\bx_{[j]}\by}} \| = r_t a_i$.
	In particular
	$\|\bq_{{\bar\bx}} - \bq_{\overline{\bx_{[j]}\by}}\|\leq 2 r_t
	\min_{i \in [j]} a_i$.
	Set $z:= \bq_{\overline{\bx_{[j]}\by}} - \bq_{\bar\bx}$.
	For each $i \in [j]$
	the vector $x_i$ lies on the surface of the sphere
	centred on
	$\bq_{\bar\bx} + \frac12 z$
	of radius
	$\sqrt{(a_i r_t)^2 - (\|z\|/2)^2} $
	and also on the hyperplane through 
	$\bq_{\bar\bx} + \frac12 z $
	perpendicular to $z$.
	 Hence for each $i \in [j]$ we can write
	 \begin{align*}
		 x_i = 
		 \bq_{\bar\bx} + (1/2) z
		 + ((a_i r_t)^2 - (\|z\|/2)^2)^{1/2}
		 \Theta_{ \|z\|^{-1} z }((\omega_i,0))
	 \end{align*}
	 for some $\omega_i \in \mathbb{S}_{d-2}$,
	 where
	 for any unit vector $e$ in $\R^d$ we denote by $\Theta_e$
	 the geodesic rotation that sends $\bfe_d$ to $e$, and
	 $\mathbb{S}_{k}$ denotes the  $k$-dimensional
	 unit sphere for all $k \in \NN$.

	Write 
  $x_i= \bq_{\bar\bx}  + r_t a_i u_i$ for $i\in [d]\setminus [j]$ and
	$y_i= \bq_{\overline{\bx_{[j]} \by}} + r_t b_i u'_i, i\in [d-j]$.
	Write $v_k$ for the $k$-dimensional surface measure
	on 
	$\mathbb{S}_k$;  
as in the proof of Proposition \ref{p:case2}, the $k$-dimensional surface
integral can be written in terms of $k$ real-valued polar
coordinates.
There is a finite constant $c_9$ such that
	for any
	fixed $(a_1,\ldots,a_d, b_1,\ldots,b_{d-j})$ the Jacobian $J$ in 
 \begin{align*}
	 \la_d^{2d-j}(d(x_1,\ldots,x_d,y_1,\ldots,y_{d-j}) )=& 
	 J(\bq_{\bar\bx}, \bq_{\overline{\bx_{[j]}\by}},\omega_{[j]}, 
	 u_{[d]\setminus [j]},
	 u'_{[d-j]}) 
	 v_{d-2}^j
	 (d (\omega_1,\ldots,\omega_j))
	 \\
	 & \times 
	 v_{d-1}^{d-j}(d(u_{j+1},\ldots, u_d))
	 v_{d-1}^{d-j}(d(u'_1,\ldots,u'_{d-j})) 
	 dz
	 d\bq_{{\bar\bx}}
\end{align*}
satisfies 
\begin{align*}
	J & \le  c_9 r_t^{j (d-2) + 2(d-j)(d-1)}
	\prod_{i\in [j]} a_i^{d-2} \prod_{i \in [d] \setminus [j]}
a_i^{d-1} \prod_{i \in [d-j]} b_i^{d-1}
	\\ & 
=  
c_9
r_t^{2d^2 -d(2+j)}
	\prod_{i\in [j]} a_i^{d-2} \prod_{i \in [d] \setminus [j]}
a_i^{d-1} \prod_{i \in [d-j]} b_i^{d-1}.  
\end{align*}
Therefore using \eqref{e:IIa},
writing $q$ for 
$q_{\bar\bx}$ we obtain that
\begin{align*}
	\EE \Big[ 
	\sum_{\bar\bx \subset \xi'_t}
	h_D^{(r_t)}(\bar\bx)
	T^{(j)}_{\bar\bx,D} 
%
	\Big]
	\le
	& c_4 c_9 
	(\E[Y^{d-2}])^j (\E[Y^{d-1}])^{2d-j}
	\Big( \frac{
	t^{2d-j -1} r_t^{2d^2 - d(2+j)}
	}{(\log t)^{d-1} 
	}
\Big)
	\\ & ~~~~~~
	\times 
	\int_{D} 
	\int_{\RR^d}  
	\exp( -c_1 t r_t^{d-1} \|z\| )
	\lambda_d(dz) \lambda_d(d\bq).
\end{align*}
Changing variable to $w = tr_t^{d-1} z$ gives us a further factor
of $(tr_t^{d-1})^{-d}$. Therefore 
\begin{align}
	 \EE \Big[ \sum_{i \in \cI(t)}
	 \sum_{\bar\bx \subset \xi'_t}
	 h_{Q_{t,i}^-}^{(r_t)}(\bar\bx)
	 T^{(j)}_{\bar\bx,Q_{t,i}^-} \Big]
	= O((\log t)^{1-d} t^{d-j  -1} r_t^{d^2 -d -jd } ) =
O((\log t)^{-j})
	.
	\label{e:IIaest}
\end{align}

Now consider $U_{\bar\bx,D}^{(j)}$. Using
 \eqref{e:LBfar}
 we obtain a similar bound to \eqref{e:IIa} but with 
 $U_{\bar\bx,D}^{(j)}$ replacing 
 $T_{\bar\bx,D}^{(j)}$ on the left and
 a factor 
 $e^{-c_2t r_t^d}$
 replacing  the factor of
	$ \exp(- c_1 t \|\bq_{\bar\bx} - \bq_{\overline{\bx_{[j]} \by}}\|
	r_t^{d-1} ) $
 on the right.

 We make the same change of variables as before, noting that
 for each $i \in [j]$ we require $r_ta_i \geq \|
 \bq_{\bar\bx}
 -
 \bq_{\overline{\bx_{[j]}\by}}
 \|/2$.
 Hence for the integral with respect to $\QQ'_t(da_i) v_{d-2} ( d\omega_i)$
 we obtain a factor  bounded by a constant times
 \begin{align*}
	 \int_{[\|\bq_{\bar\bx} - q_{\overline{\bx_{[j]}\by}}
	 \|/(2r_t),\infty)} a^{d-2} \QQ'_t(da)
	 =  \EE [Y^{d-2} {\bf 1}\{
		 \|\bq_{\bar\bx} - \bq_{\overline{\bx_{[j]}\by}}\|/(2r_t)
		 \leq Y
		 \leq 
		 t^\xxi 
	 \}].
 \end{align*}
Therefore integrating out everything
 except the variables $\bq_{\bar\bx}$ and $\bq_{\overline{\bx_{[j]} \by}}$
 and writing $z$ for $\bq_{\overline{\bx_{[j]}\by}} - \bq_{\bar\bx}$ and
 then $w= (2 r_t)^{-1} z$ we obtain 
 similarly to before that
 there is a finite constant $c_{10}$ such that
\begin{align*}
	\EE \Big[ \sum_{i \in \cI(t)}
	\sum_{\bar\bx 
	\subset \xi'_t} h_{Q_{t,i}^-}^{(r_t)} (\bar\bx)
	U^{(j)}_{\bar\bx,Q_{t,i}^-} \Big] 
	& \le 
	 \frac{c_{10} t^{2d-j -1} r_t^{2d^2 - d(2+j)} 
	 }{
		 (\log t)^{d-1}}
	 \times \sum_{i \in \cI(t)}
	 \\
	 & \times 
	\int_{Q^-_{t,i}} 
	\int_{\R^d}  
	e^{ -c_2 t r_t^{d}  }
	(\E[ Y^{d-2} {\bf 1}\{t^\xxi \geq Y \geq \|z\|/r_t\} ])^j
	\lambda_d(dz) \lambda_d(d\bq)
	\\
	& \leq 
	 \frac{2^d c_{10} 
	 t^{2d-j  -1} r_t^{2d^2 - d(1+j) }
	e^{-c_2tr_t^d}
	}{(\log t)^{d-1}
	}
	\int_{\R^d} \Big( \int_{[\|w\|,\infty)} a^{d-2} \QQ(da) \Big)^j
	dw.
\end{align*}
Since we
assume $\E[Y^{2d-2}] < \infty$ the  integral on the last line is finite.
Indeed, for $j =1$, by Fubini's theorem it is bounded by
$
\theta_d \int_{\R_+} a^{2d-2} \QQ(da), 
$
which is finite. For general $j \geq 1$,
choosing $K \in \R_+$ such that
$\E[Y^{d-2} {\bf 1}\{ Y \geq K\} ] \leq 1$, we find
that the contribution to the integral from $(w,a)$ with
$\|w\| >K $ is no greater for $j >1$ than it would be for $j=1$
and therefore is finite, while the contribution
from $(w,a)$ with $\|w\| \leq K$ is also finite.
Thus 
\begin{align*}
	\EE \Big[ \sum_{i\in \cI(t)}
	\sum_{\bar\bx 
	\subset \xi'_t}
	h_{Q_{t,i}^-}^-(\bar\bx)
	U^{(j)}_{\bar\bx,Q_{t,i}^-} \Big]
	= O\Big(\Big(\frac{(tr_t^d)^{2d-j-1}}{(\log t)^{d-1} }
	\Big) e^{-c_2 tr_t^d}\Big)
	= O(e^{-(c_2 /2) tr_t^d}),
\end{align*}
	and combined with \eqref{e:IIaest} this gives us the result.
\end{proof}

\begin{proof}[Proof of Proposition  \ref{p:sumbd}]
	 Lemmas \ref{l:simple} -- \ref{p:I1}
	and 
	\ref{p:case2} --
	\ref{p:case3}
	yield \eqref{e:Esumbd}, 
	as required.
\end{proof}

{\bf Acknowledgements.}
 This research was supported by EPSRC grant EP/T028653/1.
 For the purpose of open access, the authors have applied a Creative Commons Attribution (CC-BY) licence to any Author Accepted Manuscript version arising.

  \appendix
  
\bibliographystyle{plain}

\begin{thebibliography}{123}

%


	\bibitem{Aldous}
Aldous, D. (1989).
{\em Probability Approximations via the Poisson Clumping Heuristic.}
Springer-Verlag, New York.

		\bibitem{HallZW}
Hall, P.
(1985).
Distribution of size, structure and number of vacant regions in a high-intensity mosaic.
{\em Z. Wahrsch. Verw. Gebiete} {\bf 70}, 237--261.


\bibitem{HallBk}
Hall, P. (1988).
{\em Introduction to the Theory of Coverage Processes.}
  Wiley, New York.


		\bibitem{Janson}
Janson, S.
(1986).
Random coverings in several dimensions.
{\em Acta Math.} {\bf 156},
 83--118.


		\bibitem{Lan}
Lan, G.-L., Ma, Z.-M. and Sun, S.-Y. (2012).
 Coverage of random discs driven by a Poisson point process.
{\em Probability Approximations and Beyond,} Eds
Barbour, A.D., Chan, H.P. and Ziegmund, D., 43--59,
 Springer, New York.



 \bibitem{LP}
Last, G. and Penrose, M. (2018). {\em Lectures on the Poisson Process.}
Cambridge University Press, Cambridge.

\bibitem{Pen} Penrose, M. (2003). {\em Random Geometric Graphs.}
	Oxford University Press, Oxford.



		\bibitem{P23}
        Penrose, M.D. (2023).
        Random Euclidean coverage from within.
{\em Probab. Theory Related Fields} {\bf 185}, 747--814.

\bibitem{PH24}
	Penrose, M.D. and Higgs, F. (2024)
	Random coverage from within with variable radii, and Johnson-Mehl
		cover times. Arxiv:2405.17687

	\bibitem{PY24}
		Penrose, M. D. and Yang, X. (2024)
		On the rate of convergence in the Hall-Janson coverage theorem.
		Arxiv:2405.16461v2

\end{thebibliography}

\end{document}